\newcommand{\ind}[1]{1_{#1}}
\newcommand{\lo}[1]{\mathcal{L}(#1)}
\def\E{\mathbb{E}}
\newcommand{\bbf}{\mathbb{F}}
\newcommand{\bbz}{\mathbb{Z}}
\newcommand{\bbr}{\mathbb{R}}
\newcommand{\bbc}{\mathbb{C}}
\newcommand{\abs}[1]{\left\lvert #1\right\rvert}
\newcommand{\Abs}[1]{\lvert #1\rvert}
\newcommand{\brac}[1]{\left( #1\right)}
\newcommand{\norm}[1]{\left\lVert #1\right\rVert}
\newcommand*{\bbe}{
  \mathop{
    \mathchoice{\vcenter{\hbox{\larger[4]$\mathbb{E}$}}}
               {\kern0pt\mathbb{E}}
               {\kern0pt\mathbb{E}}
               {\kern0pt\mathbb{E}}
  }\displaylimits
}
\DeclareMathOperator{\rk}{rk}
\DeclareMathOperator{\supp}{supp}
\def\Ghat{\widehat{G}}
\def\fhat{\widehat{f}}
\def\zeroGhat{0_{\Ghat}}
\newtheorem{theorem}{Theorem}
\newtheorem{lemma}[theorem]{Lemma}
\newtheorem{corollary}[theorem]{Corollary}
\newtheorem{proposition}[theorem]{Proposition}
\theoremstyle{definition}
\newtheorem{definition}[theorem]{Definition}
\title[Kelley-Meka bounds for three-term progressions]{The Kelley-Meka bounds for sets free of three-term arithmetic progressions}
\author{Thomas F. Bloom}
\address{Mathematical Institute\\Woodstock Road\\Oxford\\OX2 6GG, United Kingdom}
\email{bloom@maths.ox.ac.uk}
\author{Olof Sisask}
\address{Department of Mathematics\\
     Stockholm University\\
     SE-106 91 Stockholm\\
     Sweden
}
\email{olof.sisask@math.su.se}
\begin{document}

\maketitle

\begin{abstract}
We give a self-contained exposition of the recent remarkable result of Kelley and Meka: if $A\subseteq \{1,\ldots,N\}$ has no non-trivial three-term arithmetic progressions then $\lvert A\rvert \leq \exp(-c(\log N)^{1/12})N,$ where $c>0$ is a constant.

Although our proof is identical to that of Kelley and Meka in all of the main ideas, we also incorporate some minor simplifications relating to Bohr sets. This eases some of the technical difficulties tackled by Kelley and Meka and widens the scope of their method. As a consequence, we improve the lower bounds for the problem of finding long arithmetic progressions in $A+A+A$, where $A\subseteq \{1,\ldots,N\}$.
\end{abstract}

How large can a subset of $\{1,\ldots,N\}$ be without containing a non-trivial three-term arithmetic progression $\{a,a+d,a+2d\}$? (Non-trivial here means that $d\neq 0$.) This seemingly innocuous question, asked by Erd\H{o}s and Tur\'an \cite{ErTu} in 1936, has led to a wealth of interesting mathematics, and has become one of the central questions in additive combinatorics. A large part of the reason for this is that the tools and techniques that have been developed to tackle it, starting with the argument of Roth \cite{Ro} from 1953, have turned out to be very influential, not only in dealing with other problems in additive number theory, but also in motivating the development of tools in other areas of mathematics -- particularly in harmonic analysis. 

This note gives an exposition of a recent remarkable breakthrough result of Kelley and Meka \cite{KM} concerning this question: they prove a very strong upper bound for the maximal size of sets free of three-term progressions, far smaller than any previously available.

Let $A\subseteq \{1,\ldots,N\}$ be a set which does not contain any (non-trivial) three-term arithmetic progressions. Even establishing that $\lvert A\rvert = o(N)$ is a difficult problem, this being Roth's landmark 1953 result \cite{Ro}. Since Roth's work there has been a sequence of quantitative improvements to the upper bound, all of the form $\lvert A\rvert \leq N/(\log N)^{C}$ for some constant $C>0$, culminating in recent work of the authors \cite{BS}, who showed that $C=1+c$ is permissible, where $c>0$ is some tiny constant (we refer to the introduction of \cite{BS} for further history). 

Kelley and Meka's new upper bound is of a whole new order of magnitude. 
\begin{theorem}[Kelley-Meka]\label{th-main-int}
If $A\subseteq \{1,\ldots,N\}$ contains no non-trivial three-term arithmetic progressions, then
\[\abs{A} \leq \frac{N}{\exp\left(c(\log N)^{1/12}\right)}\]
for some absolute constant $c>0$.
\end{theorem}

The Kelley-Meka bound is a huge leap forward,
 and tantalisingly close to the best possible such bound. Indeed, we know that there are subsets of $\{1,\ldots,N\}$ of size at least $\exp(-c(\log N)^{1/2})N$, where $c>0$ is some universal constant, that contain no non-trivial three-term progressions. This was first proved by Behrend \cite{Be} using a beautiful construction that uses lattice points on high-dimensional spheres; small improvements have also been established by Elkin \cite{El} and Green and Wolf \cite{GW}. Getting anywhere near Behrend-style bounds for this problem has been a long-standing goal in the area, and the argument of Kelley and Meka achieves this in a beautiful and elegant way.

Our reasons for writing this note are
\begin{enumerate} 
\item to explain the Kelley-Meka approach using terminology and a perspective perhaps more familiar to researchers in additive combinatorics, and
\item to provide some minor technical refinements which allow for a proof of the full Theorem~\ref{th-main-int} which involves only `classical' Bohr set techniques, rather than the ad hoc method employed in \cite{KM} (in particular we answer \cite[Question 6.2]{KM} of whether such an approach is possible in the affirmative). This widens the scope of potential applications over the integers, and allows for integer analogues of the other main results in \cite{KM}.
\end{enumerate}
It must be made clear, however, that our sole contribution is at the technical level, allowing the Bohr set machinery to run a little more smoothly; all of the main ideas are the same as in \cite{KM}.

\subsection*{The finite field case}
As usual in this area, a simpler model case is provided by replacing $\{1,\ldots,N\}$ with the vector space $\mathbb{F}_q^n$. 
We will present Kelley and Meka's ideas in this model setting before the general case, since the former is technically much simpler, while still containing all of the important new ideas. Kelley and Meka's argument in $\bbf_q^n$ establishes the following. 
\begin{theorem}[Kelley-Meka]\label{th-main-ff}
If $q$ is an odd prime and $A\subseteq \bbf_q^n$ has no non-trivial three-term progressions, then $\lvert A\rvert \leq q^{n-cn^{1/9}}$ for some constant $c>0$.
\end{theorem}
The utility of Theorem~\ref{th-main-ff} is as a demonstration of the proof techniques, since for the result itself a stronger bound of $\lvert A\rvert \leq q^{n-cn}$ is available via the polynomial method, as shown by Ellenberg and Gijswijt \cite{EG}. Unfortunately, however, there is no known analogue of the polynomial method for the integer problem, so achieving strong bounds for the integer problem via this method is out of reach. Kelley and Meka's proof of Theorem~\ref{th-main-ff} uses no polynomial methods, and instead uses techniques from probability and Fourier analysis, which can be generalised (using classical Bohr set machinery) to the integer setting.

After presenting the Kelley-Meka argument for $\bbf_q^n$ we will generalise this, using the language of Bohr sets, to prove Theorem~\ref{th-main-int}. Kelley and Meka take a different, more ad hoc route, iterating over high-dimensional progressions. This is an ingenious alternative, that may itself have further applications, but our aim here is to show that more classical existing techniques also suffice. As a consequence, we can also establish the following integer analogue of \cite[Corollary 1.12]{KM}, which finds large subspaces in $A+A+A$ for $A\subseteq \bbf_q^n$.

\begin{theorem}\label{th-3A}
If $A\subseteq \{1,\ldots,N\}$ has size $\alpha N$, then $A+A+A$ contains an arithmetic progression of length
\[\geq \exp(-C\log(2/\alpha)^3)N^{c/\log(2/\alpha)^{9}},\]
where $C,c>0$ are constants.
\end{theorem}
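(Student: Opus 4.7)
The plan is to translate the argument of \cite[Corollary 1.12]{KM} for $\mathbb{F}_q^n$ into the integer setting, with regular Bohr sets playing the role of subspaces. Embedding $A$ into $\bbz/N'\bbz$ with $N' \asymp N$ chosen so that $A+A+A$ does not wrap around, the first step is to iterate the integer form of the Kelley--Meka density increment established earlier in this paper until $A$ has density at least $3/4$ (say) on some translate $t+B$ of a regular Bohr set $B = B(\Gamma,\rho)$. Tracking the parameters through the $O(\lo{\alpha})$ increment stages needed to reach constant density should yield rank $\abs{\Gamma} \ll \lo{\alpha}^{9}$ and radius $\rho \geq \exp(-O(\lo{\alpha}^{2}))$, exactly matching the exponents in the claim.

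Setting $A' := (A-t)\cap B$, we have $\abs{A'} \geq (3/4)\abs{B}$. The vector-space identity ``density $>1/2$ in $V$ implies $A'+A' = V$'' is then replaced by the following Bohr-set Bogolyubov step: for any $y$ in the sub-Bohr set $B'' := B(\Gamma,\rho'')$ with $\rho'' = c\rho/\abs{\Gamma}$ and $c>0$ small, one has $y - A' \subseteq B(\Gamma,\rho+\rho'')$, so regularity of $B$ gives
\[\abs{A' \cap (y-A')} \;\geq\; 2\abs{A'} - \abs{B(\Gamma,\rho+\rho'')} \;\geq\; \brac{\tfrac{1}{2} - O(\abs{\Gamma}\rho''/\rho)}\abs{B} \;>\; 0.\]
Hence $B'' \subseteq A'+A'$. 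Unwinding the translation gives $A+A \supseteq 2t + B''$, and therefore
\[A + A + A \;\supseteq\; \{a_0\} + (A+A) \;\supseteq\; (a_0 + 2t) + B''\]
for any fixed $a_0 \in A$; so $A+A+A$ contains a translate of the Bohr set $B''$.

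It then remains to invoke the standard Dirichlet-pigeonhole fact that any Bohr set $B(\Gamma,\rho'')$ in $\bbz/N'\bbz$ contains an arithmetic progression of length $\gg \rho''(N')^{1/\abs{\Gamma}}$. Substituting $\abs{\Gamma} \ll \lo{\alpha}^{9}$ and $\rho'' \gg \exp(-O(\lo{\alpha}^{2}))$ delivers the claimed bound.

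I expect the main obstacle to lie in the very first step: one must organise the density-increment iteration so that regularity of the Bohr set is maintained throughout, so that its rank grows by only $\lo{\alpha}^{9}$ in total, and so that its radius shrinks by no worse than $\exp(-O(\lo{\alpha}^{2}))$. This is precisely where the paper's ``classical'' Bohr-set refinements over \cite{KM} earn their keep; in contrast, both the Bogolyubov finish and the Bohr-set-to-AP step are entirely standard and dovetail easily with the output of the iteration.
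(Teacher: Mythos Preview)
Your plan diverges from the paper precisely at the first step, and the divergence is not just cosmetic: the iteration you describe cannot be carried out as stated.

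The paper does \emph{not} iterate the Kelley--Meka increment until $A$ reaches relative density $3/4$ on a Bohr set. It invokes Theorem~\ref{th-int-gen} once: this internally iterates until the autoconvolution becomes $L^p$-balanced, producing a regular Bohr set $B$ of rank $O(\lo{\alpha}^9)$ and size $\exp(-O(\lo{\alpha}^{11}))$ together with $A'\subseteq B$ of relative density still only $\approx\alpha$, but with
\[
\norm{(\mu_{A'}-\mu_B)\ast(\mu_{A'}-\mu_B)}_{p(\mu_{B'})}\leq \tfrac14\,\mu(B)^{-1}\qquad(p\asymp\lo{\alpha}).
\]
From this balance (not from high density) the paper argues, via Proposition~\ref{prop-holder} with $C=B'\setminus(A'+A')$, that $A'+A'$ occupies at least $1-\alpha/4$ of $B'$. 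The regularity/pigeonhole step you wrote down is then applied with $A'+A'$ playing the role you assigned to $A'$, giving $B''\subseteq A'+A'+A'$ directly; the third copy of $A$ is essential here, not added on afterwards.

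The gap in your version is that the Kelley--Meka increment is conditional: it fires only when $\norm{\mu_{A'}\circ\mu_{A'}}_p$ is large, equivalently when some count $\langle\mu_{A'}\ast\mu_{A'},\mu_C\rangle$ deviates from its expected value. Nothing forces this hypothesis merely from $\mu_B(A')<3/4$, so the iteration may (and generically will) halt in the balanced state with $\mu_B(A')$ still of order $\alpha$. At that point your two-set Bogolyubov step $\abs{A'\cap(y-A')}>0$ fails, since it needs density strictly above $1/2$. The paper's remedy is to use the balancing itself---which is exactly what the halting condition guarantees---as the structural input, at the cost of needing all three summands of $A+A+A$ in the finishing step. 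Your rank and size targets ($\lo{\alpha}^9$ and $\exp(-O(\lo{\alpha}^{11}))$, hence AP length $\exp(-O(\lo{\alpha}^2))N^{\Omega(1/\lo{\alpha}^9)}$ via Lemma~\ref{lem-bohrap}) are then recovered, but through Theorem~\ref{th-int-gen} rather than through an iteration to constant density.
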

For comparison, the previously best bound known was $N^{\alpha^{1+o(1)}}$, originally due to Sanders \cite{Sa1}. A construction due to Freiman, Halberstam, and Ruzsa \cite{FHR} shows that no exponent better than $c/\log(2/\alpha)$ is possible.

Kelley and Meka deduce the above bounds for sets without three-term arithmetic progressions, and more besides, from a more general type of structure result. This says, roughly speaking, that for any reasonably large set $A$ inside a finite abelian group there exists some structured set $V$ (e.g. an affine subspace in the $\bbf_q^n$ case) such that $A$ restricted to $V$ is very `regular' in its additive behaviour -- that is, for `most' $x$ the number of solutions to $x=a+b$ with $a,b \in A \cap V$ is very close to the average number of solutions.

We will first state a precise version of this structural result for $\bbf_q^n$. It is convenient to introduce the notion of a normalised indicator function: if $A\subseteq G$ is non-empty with size $\lvert A\rvert=\alpha \lvert G\rvert$ then we write $\mu_A=\alpha^{-1}\ind{A}$.  (For a set $A$, we write $1_A$ for the indicator function of $A$, taking the value $1$ on $A$ and $0$ elsewhere.) If $V\subseteq G$ and $1\leq p<\infty$, then we denote the $L^p(V)$ norm of $f : G \to \bbc$ by
\[\norm{f}_{p(\mu_V)} = \brac{\frac{1}{\abs{V}} \sum_{x \in V} \abs{f(x)}^p}^{1/p}.\]
Note, for example, that our choice of normalisation is chosen to ensure $\norm{\mu_A}_{1(\mu_G)}=1$. 
We will measure the aforementioned regularity of $A\subseteq V$ by bounding the $L^p(V)$ norm of the difference between the convolution 
\[\mu_A\ast \mu_A(x) = \frac{1}{\lvert G\rvert}\sum_{a,b\in G}1_{a+b=x}\mu_A(a)\mu_A(b)=\frac{\lvert G\rvert}{\lvert A\rvert^2}\sum_{a,b\in A}1_{a+b=x}\]
and its expected value. An elementary calculation shows that, when $V\leq G$ is a subgroup and $A\subseteq V$,
\[\norm{\mu_A\ast \mu_A}_{1(\mu_V)}=\frac{\abs{G}}{\abs{V}}=\norm{\mu_V}_{1(\mu_V)}.\]
That is, the average of $\mu_{A}\ast \mu_{A}$ over $V$ agrees with the average of $\mu_V$ itself. 

Note carefully that, even though we are taking a \emph{local} $L^p$ norm restricted to $V$ we are keeping $\mu_A$ and $\ast$ normalised relative to the \emph{global} group $G$. This is a little confusing at first, but when we move from $\bbf_q^n$ to general groups it is much more convenient to keep as many definitions as possible global, rather than relativising to the local $V$, since in general (e.g. when we move from $\bbf_q^n$ to $\bbz/N\bbz$) the subgroup $V$ will be replaced with a set that is only approximately structured.

\begin{theorem}[Kelley-Meka]\label{th-ff-gen}
Let $\epsilon>0$. There is a constant $C=C(\epsilon)>0$ such that the following holds. Let $G=\bbf_q^n$ for some prime $q$ and $n\geq 1$.  Let $p\geq 1$. For any non-empty $A\subseteq G$ with $\abs{A}=\alpha \abs{G}$ there exists a subspace $V\leq G$ with
\[\mathrm{codim}(V)\leq Cp^4\log(2/\alpha)^5\]
and $x \in G$ such that, if $A' =(A-x)\cap V$, then
\begin{enumerate}
\item \label{item:dens1'} $\abs{A'}\geq (1-\epsilon)\alpha \abs{V}$ and
\item \label{item:small_moment'}
$\norm{\mu_{A'}\ast \mu_{A'}-\mu_V}_{p(\mu_V)} \leq \epsilon \frac{\abs{G}}{\abs{V}}$.
\end{enumerate}
\end{theorem}
The factor $\frac{\abs{G}}{\abs{V}}$ here can be thought of as a normalising/scaling factor, corresponding to the fact that the convolution is defined over $G$ rather than over $V$. 

Intuitively, the second item of the conclusion says, as $p\to \infty$, that $\mu_{A'}\ast \mu_{A'}=(1+O(\epsilon))\mu_V$ with high probability, as $x$ ranges uniformly over $V$. Therefore, when studying many problems involving the additive behaviour of $A'$, one can replace $A'$ with a random subset of $V$ of the same density. This is, of course, a very powerful tool. The cost is that we have had to restrict our original set $A$ to an affine subspace $x+V$ to find this regularity, and so having the codimension of $V$ be as small as possible is important for quantitative applications. For example, in the deduction of Theorem \ref{th-main-ff} from this one may take $p$ to be around $\log(2/\alpha)$ and $\epsilon$ a constant. The resulting codimension bound of $C\log(2/\alpha)^9$ then corresponds to the exponent $1/9$ in Theorem \ref{th-main-ff}.

\subsection*{The general group case: Bohr sets}
To prove Theorems \ref{th-main-int} and \ref{th-3A} we shall use a more general version of Theorem \ref{th-ff-gen}, applicable to any finite abelian group (which in applications will be $\mathbb{Z}/N\mathbb{Z}$). One difficulty in even writing down the appropriate statement is working out what should play the role of subspaces for general abelian groups. This is not obvious; fortunately for us, however, this was already done by Bourgain \cite{Bo}, who showed that a suitable generalisation of a subspace, for these purposes, is a \emph{Bohr set}. A Bohr set is an approximate level set of some characters, or more precisely, a set of the shape 
\[ B = \mathrm{Bohr}_\nu(\Gamma) = \left\{ x\in G : \abs{1-\gamma(x)}\leq \nu\textrm{ for all }\gamma\in\Gamma\right\}\]
for some $\nu\geq 0$ (known as the width) and some $\Gamma\subseteq \widehat{G}$ (known as the frequency set). Often the most important thing about the frequency set is its size $d=\abs{\Gamma}$, which is called the rank of the Bohr set. 

Background material on Bohr sets can be found in Appendix~\ref{app-bohr}, but the unfamiliar reader can for now think of the above Bohr set $B$ of rank $d = \abs{\Gamma}$ as roughly like the embedding in $G$ of the lattice points in a $d$-dimensional box in $\mathbb{\bbr}^d$ of side-length proportional to $\nu$. Writing $B_\rho$ for the same Bohr set but with the `width' $\nu$ replaced by $\rho\cdot \nu$, one then has the approximate closure property that $B + B_\rho \approx B$ provided $\rho$ is small -- in particular, $B+B_\rho$ is not much larger than $B$ provided $\rho$ is small compared to $1/d$. It turns out that this approximate additive closure property of Bohr sets of rank $d$ can, for the purposes of this paper, be used in place of the exact rigid structure provided by subspaces of codimension $d$ in $\mathbb{F}_q^n$ (which are indeed Bohr sets themselves, of rank $d$ and width $\nu = 0$).

Since Bohr sets only enjoy an approximate closure property, statements involving them necessarily require more technical conditions and quantitative overhead. This is why the use of the finite field model of $\bbf_q^n$ is invaluable in this area: an idea can be tested with subspaces in a relatively clean way, and only once the idea has been proven to have real quantitative strength does the work of translating the argument to work over general Bohr sets need to begin.

The following is the generalisation of Theorem~\ref{th-ff-gen} required for our applications. Kelley and Meka did not prove such a statement, but speculated that this should be possible in \cite[Footnote 9]{KM}. 

The reader should compare the statement to Theorem~\ref{th-ff-gen}, and at first reading may wish to pretend that $B = B' = B_\rho$ is the same subspace $V\leq \bbf_q^n$. For comparison to the conclusion of Theorem~\ref{th-ff-gen} it may help to note that, when $B$ is a subspace and $A'\subseteq B$, then $\mu_B\ast \mu_B=\mu_B=\mu_{A'}\ast \mu_B$, and so
\[(\mu_{A'}-\mu_B)\ast (\mu_{A'}-\mu_B)=\mu_{A'}\ast \mu_{A'}-\mu_B.\]

\begin{theorem}\label{th-int-gen}
There is a constant $c>0$ such that the following holds. Let $\delta,\epsilon\in (0,1)$, let $p \geq 1$ and let $k$ be a positive integer such that $(k,\abs{G})=1$. There is a constant $C=C(\epsilon,\delta,k)>0$ such that the following holds.

For any finite abelian group $G$ and any subset $A\subseteq G$ with $\abs{A}=\alpha \abs{G}$ there exists a regular Bohr set $B$ with
\[\rk(B)\leq Cp^4\log(2/\alpha)^5\]
and
\[\abs{B}\geq \exp\left(-Cp^5\log(2p/\alpha)\log(2/\alpha)^6\right)\abs{G}\]
and $A' \subseteq (A-x)\cap B$ for some $x\in G$ such that
\begin{enumerate}
\item \label{item:dens1} $\abs{A'}\geq (1-\epsilon)\alpha \abs{B}$,
\item \label{item:dens2} $\abs{A'\cap B'}\geq (1-\epsilon)\alpha\abs{B'}$, where $B'=B_{\rho}$ is a regular Bohr set with $\rho\in (\tfrac{1}{2},1)\cdot c\delta\alpha/dk$, and 
\item \label{item:small_moment}
\[\norm{(\mu_{A'}-\mu_B)\ast (\mu_{A'}- \mu_B)}_{p(\mu_{k\cdot B'})} \leq \epsilon\frac{\abs{G}}{\abs{B}}.\]
\end{enumerate}
\end{theorem}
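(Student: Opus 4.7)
The plan is a density-increment iteration modelled on the Kelley--Meka proof of the analogous $\bbf_q^n$ result, with Bohr sets replacing subspaces. I would construct a chain of regular Bohr sets $B^{(0)} \supseteq B^{(1)} \supseteq \cdots$ and translates $A^{(i)} \subseteq (A - x_i) \cap B^{(i)}$ whose relative densities $\alpha_i := \mu_{B^{(i)}}(A^{(i)})$ are non-decreasing, initialising with $B^{(0)} = G$, $x_0 = 0$, $A^{(0)} = A$, so $\alpha_0 = \alpha$. At each step I would fix a regular dilate $B'^{(i)} = (B^{(i)})_{\rho_i}$ with $\rho_i$ in the range prescribed by the theorem, and check whether the conclusion of the theorem is already satisfied for $(A^{(i)}, B^{(i)}, B'^{(i)})$. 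If yes, output it; if not, use the failure of condition (\ref{item:small_moment}) to produce a density increment and iterate.

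The density-increment step is the heart of the argument, and the part that directly adapts Kelley--Meka. Suppose the norm in (\ref{item:small_moment}) exceeds $\eps\mu(B^{(i)})^{-1}$. A Hölder/Plancherel manipulation converts this $L^p$ lower bound on the self-convolution of $\mu_{A^{(i)}} - \mu_{B^{(i)}}$ into an estimate pairing the balanced function $1_{A^{(i)}} - \alpha_i 1_{B^{(i)}}$ against a structured object supported on the large spectrum $\mathrm{Spec}_\eta(\mu_{k \cdot B'^{(i)}})$; this is the Bohr-set translation of Kelley--Meka's key spectral lemma. A Chang/Bogolyubov-style argument then localises this mass inside a Bohr set of rank only slightly larger than $\rk(B^{(i)})$, and passing to a regular sub-dilate (standard regularisation, Appendix~\ref{app-bohr}) yields a regular $B^{(i+1)} \subseteq B^{(i)}$ with
\[\rk(B^{(i+1)}) \leq \rk(B^{(i)}) + O_{\eps,k}(\lo{\alpha}^4 p^3),\]
together with a translate $x_{i+1}$ realising $\alpha_{i+1} \geq (1 + c\eps)\alpha_i$.

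Since $\alpha_i \leq 1$, the multiplicative increment guarantees the iteration halts after at most $T = O_\eps(\lo{\alpha})$ steps. Summing the per-step rank additions gives total rank $O_{\eps,k}(\lo{\alpha}^5 p^4)$, and multiplying the per-step measure ratios gives log-measure loss bounded by $O_{\eps,\delta,k}(\lo{\alpha}^6 p^4(\lo{\alpha}+p))$, matching the statement. A short cleanup using Bohr-set regularity transfers density $(1-\eps)\alpha$ from $B$ to the smaller regular dilate $B'$, delivering (\ref{item:dens1}) and (\ref{item:dens2}) simultaneously. The main obstacle is the density-increment step: the $L^p$ control in (\ref{item:small_moment}) lives on the dilate $k \cdot B'$ rather than on $B$ itself, so one must carefully track how sumsets, Fourier mass, and regularity interact under dilation of Bohr sets coprime to $\abs{G}$. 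This delicate bookkeeping, into which our note's technical refinements over Kelley--Meka's ad hoc high-dimensional-progression route are concentrated, is what allows the rank and measure losses to accumulate only polylogarithmically in $1/\alpha$ and produces the stated bounds.
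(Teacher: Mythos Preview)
Your iterative framework is correct, but the density-increment step you describe is not the Kelley--Meka argument and would not yield the stated bounds. You propose to go from the failure of \eqref{item:small_moment} to a density increment via ``H\"older/Plancherel'' followed by a ``Chang/Bogolyubov-style'' localisation on the large spectrum. This is essentially the pre-Kelley--Meka Fourier approach, and it is precisely the approach the paper is \emph{replacing}: a Chang-type spectral argument from an $L^p$ bound on the balanced convolution does not produce a rank increment of size $O_{\epsilon}(\lo{\alpha}^4 p^4)$ together with a constant-factor density gain. The whole point of the Kelley--Meka breakthrough is that the spectral route gets stuck well short of quasi-polynomial bounds.

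The paper's density-increment step (Proposition~\ref{prop-it}) uses three physical-side ingredients you omit entirely. First, \emph{unbalancing} (Lemma~\ref{lem-key-spec}, Proposition~\ref{prop:Lp_orth}) exploits the spectral non-negativity of $(\mu_{A'}-\mu_B)\circ(\mu_{A'}-\mu_B)$ to pass from the balanced $L^p$ bound to $\norm{\mu_{A'}\circ\mu_{A'}}_{p'(\nu)}\geq(1+\Omega(\epsilon))\mu(B)^{-1}$; this step is essential and has no analogue in your sketch. Second, \emph{dependent random choice} (Lemma~\ref{lem-sift}) produces dense sets $A_1,A_2$ whose difference convolution is concentrated on the level set $S=\{x:\mu_{A'}\circ\mu_{A'}(x)\geq(1+\epsilon/2)\mu(B)^{-1}\}$. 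Third, \emph{almost-periodicity} (Theorem~\ref{th-ap-int}) finds a low-rank Bohr set $B'''$ with $\langle\mu_{B'''}\ast\mu_{A_1}\circ\mu_{A_2},\ind{S}\rangle$ close to $\langle\mu_{A_1}\circ\mu_{A_2},\ind{S}\rangle$, from which the density increment $\norm{\mu_{B'''}\ast\mu_{A'}}_\infty\geq 1+\Omega(\epsilon)$ follows by one line of averaging. None of these three steps is a large-spectrum or Chang-type argument; they are the new ideas, and your proposal does not contain them.
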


In other words, for any dense set $A$, we can find a low-rank Bohr set $B$ such that the restriction of (a translate of) $A$ to $B$ has almost the same relative density and has its convolution extremely balanced: it is close to its average, as measured in an $L^p$-sense over another large Bohr set. This roughly means that, when studying additive problems involving this restriction, we can replace $A$ by a random set of the same density. Part \eqref{item:dens2} of the conclusion is there for somewhat technical reasons: since we need to work with both the Bohr set $B$ and a narrowed copy $B_\rho$, as described above, we want to know that $A'$ has large density on $B_\rho$ as well as on $B$.

To give an idea of the parameters: to deduce Theorem \ref{th-main-int}, we shall take $\epsilon$ and $\delta$ some small constants, $k=2$, and $p \asymp \log(2/\alpha)$. (The dependence on $\epsilon,\delta,k$ is not hard to track explicitly, but is a distraction for the present applications.) An identical proof gives a statement with the measure $\mu_{k\cdot B'}$ in part (3) replaced by $\mu_{k\cdot B'+t}$ for any $t\in B$, or indeed $\mu_B$ or $\mu_B\ast \mu_B$, the latter two of which may appear more natural. These are, however, slightly weaker, and certainly for the application to three-term arithmetic progressions it is important that the measure over which the $L^p$ norm is taken is supported on some suitably narrowed copy of $B$.

In Section~\ref{sec-sketch} we provide an informal overview of the main steps of the argument. In Section~\ref{sec-key} we prove what are, in our opinion, the most important steps of the argument in sufficient generality for the results over both $\mathbb{F}_q^n$ and $\{1,\ldots,N\}$. In Section~\ref{sec-ff} we make the overview more precise and provide full proofs for the $\mathbb{F}_q^n$ case. Finally, in Sections~\ref{sec-int1} and \ref{sec-int2} we show how this argument can be directly adapted for the integers using Bohr sets. We will proceed by proving Theorem~\ref{th-int-gen} first and then deducing Theorems \ref{th-main-int} and \ref{th-3A}.

\subsection*{Improvements} Although Kelley and Meka's breakthrough is close to the best possible bound, there is still a gap between the exponent $1/12$ of Theorem \ref{th-main-int} and the exponent $1/2$ in the Behrend example, and it is natural to ask whether further progress is possible. Indeed, a small modification of the method as presented in this paper allows for a slight improvement: the $1/12$ of Theorem~\ref{th-main-int} can be replaced by $1/9$ with a relatively clean argument (the only modification required is to the almost-periodicity part). A further tedious lengthy technical optimisation allows for an exponent of $5/41$. Since the focus of this paper is an exposition of the method and results of Kelley and Meka, we will detail these improvements in a separate forthcoming note. The same modification leads to an improvement of the exponent in Theorem~\ref{th-main-ff} from $1/9$ to $1/7$, and an improvement of the exponent in Theorem~\ref{th-3A} from $9$ to $7$.

We believe that an exponent of $1/7$ (in the statement of Theorem~\ref{th-main-int}) is the natural limit of these methods, in that achieving anything better will require significant new ideas. Of course, the Behrend exponent of $1/2$ would be the final target, but this seems quite far out of reach still; indeed, an exponent of $1/3$ (or perhaps even $1/4$) seems to be the limit of any argument that uses any sort of `density increment' argument with Bohr sets (whether using Kelley-Meka ideas or a more traditional Fourier analytic approach). 

\subsection*{Acknowledgements} The first author is supported by a Royal Society University Research Fellowship. We would like to thank Zander Kelley and Raghu Meka for generously sharing a copy of their preprint with us, and their encouragement in writing this exposition. We would also like to thank Ben Green for helpful conversations, Ilya Shkredov for showing us an alternative argument for the proof of Lemma~\ref{lem-key-spec}, and an anonymous referee for useful suggestions.
\subsection*{Notational conventions}
Logarithmic factors will appear often, and so in this paper we use the convenient abbreviation $\lo{\alpha}$ to denote $\log(2/\alpha)$. (The $2$ here is just a convenient device to make sure that $\lo{\alpha}\geq 1/2$, say, whenever $\alpha\in(0,1]$.)

In statements which refer to $G$, this can be taken to be any finite abelian group (although for the applications this will always be either $\mathbb{F}_q^n$ or $\mathbb{Z}/N\mathbb{Z}$). We use the normalised counting measure on $G$, so that 
\[\langle f,g\rangle = \bbe_{x\in G}f(x)\overline{g(x)}\textrm{ and }\norm{f}_p=\brac{\bbe_{x\in G}\abs{f(x)}^p}^{1/p}\textrm{ for }1\leq p<\infty,\]
where $\bbe_{x\in G}=\frac{1}{\lvert G\rvert}\sum_{x\in G}$. For any $f,g:G\to \bbc$ we define the convolution  and the difference convolution\footnote{We caution that, while convolution is commutative and associative, difference convolution is in general neither.} as
\[f\ast g(x)=\bbe_y f(y)g(x-y)\quad\textrm{and}\quad f\circ g(x)=\bbe_yf(x+y)\overline{g(y)}.\]
Note the useful adjoint property
\[\langle f, g\ast h\rangle = \langle f\circ h,g\rangle.\]
We furthermore write $f^{(p)}$ for the $p$-fold convolution $f^{(p)} = f*f*\cdots*f$, where there are $p$ copies of $f$.

For some purposes it is conceptually cleaner to work relative to other non-negative functions on $G$, so that if $\mu:G\to\bbr_{\geq 0}$ has $\norm{\mu}_1=1$ we write 
\[\langle f,g\rangle_\mu = \bbe_{x\in G}\mu(x)f(x)\overline{g(x)}\textrm{ and }\norm{f}_{p(\mu)}=\brac{\bbe_{x\in G}\mu(x)\abs{f(x)}^p}^{1/p}\textrm{ for }1\leq p<\infty.\]
(The special case above is the case when $\mu\equiv 1$.)

 We use the slight abuse of notation that if $\mu:G\to \bbr_{\geq 0}$ with $\norm{\mu}_1=1$, then $\mu(A)=\norm{\ind{A}}_{1(\mu)}$ is the density of $A$ relative to $\mu$. Unless specified otherwise, $\mu$ is the uniform measure on $G$. (So that, for example, $\mu(A)=\alpha=\abs{A}/\abs{G}$ is the density of $A$ within $G$.) We write $\mu_A=\alpha^{-1}\ind{A}$ for the normalised indicator function of $A$ (so that $\norm{\mu_A}_1=1$). We will sometimes speak of $A\subseteq B$ with relative density $\alpha=\abs{A}/\abs{B}$.

The Fourier transform of $f:G\to\bbr$ is $\widehat{f}:\widehat{G}\to \bbc$ defined for $\gamma\in\widehat{G}$ as
\[\widehat{f}(\gamma)=\bbe_{x\in G}f(x)\overline{\gamma(x)},\]
where $\widehat{G} = \{ \gamma : G \to \bbc^\times : \text{$\gamma$ a homomorphism} \}$ is the dual group of $G$. We will also use convolution of functions $f, g : \widehat{G} \to \bbc$, defined as $f\ast g(\gamma) = \sum_{\chi \in \widehat{G}} f(\chi)g({\gamma-\chi})$, and denote $k$-fold convolution again by $f^{(k)}$ for such functions.\footnote{Note that we are using additive notation for the group operation on $\widehat{G}$.}
We note the following elementary facts:
\begin{itemize}
\item $\widehat{f\ast g}=\widehat{f}\cdot \widehat{g}$ and $\widehat{f\circ f}=\Abs{\widehat{f}}^2$ (so in particular the Fourier transform of $f\circ f$ is a non-negative function on $\widehat{G}$),
\item $\bbe_x f(x)^k = \widehat{f}\ast\cdots \ast\widehat{f}(\zeroGhat)$, where the convolution is $k$-fold, and 
\item if $\mu:G\to\bbr_{\geq 0}$ has $\norm{\mu}_1=1$, then the Fourier transform of $\mu-1$ is $\widehat{\mu}\ind{\neq \zeroGhat}$.
\end{itemize}
Note that these three facts immediately imply that the Fourier transform of $\mu_A\circ \mu_A-1$ is non-negative, that $\bbe (\mu_A\circ \mu_A-1)^k \geq 0$ for any integer $k$, and (coupled with the triangle inequality) that $\norm{\mu_A\ast \mu_A-1}_p\leq \norm{\mu_A\circ \mu_A-1}_p$ when $p$ is an even integer. Although easily seen via the Fourier transform, the latter two facts also have purely `physical' proofs, as we will see later.

Finally, we use the Vinogradov notation $X \ll Y$ to mean $X = O(Y)$, that is, there exists some constant $C>0$ such that $\abs{X}\leq CY$. We write $X\asymp Y$ to mean $X\ll Y$ and $Y\ll X$, and $X=\Omega(Y)$ to mean $Y = O(X)$. An expression like $1+\Omega(1)$ thus means a quantity bounded below by an absolute constant strictly bigger than $1$. The appearance of parameters as subscripts indicates that the implied constant may depend on these parameters (in some unspecified fashion).

\section{Sketch of the argument}\label{sec-sketch}

In this section we provide a sketch of the Kelley-Meka proof of the finite field model case, Theorem~\ref{th-main-ff}, along with some personal commentary and context. Kelley and Meka also include some fascinating comparisons of this approach with earlier work and speculations about it and alternative approaches, and we encourage the reader to study Appendices A and B of \cite{KM} and consider the interesting questions therein (although note that we answer their Question A.4 in the affirmative below).

Let $A\subseteq \bbf_q^n$ be a set of density $\alpha$ and $C\subseteq \bbf_q^n$ a set of density $\gamma$. (Note that a three-term arithmetic progression is a solution to $x+y=2z$, and thus for their study we will choose
\[C=2\cdot A=\{ 2a : a\in A\},\]
in which case $\gamma=\alpha$.) How many solutions to $a_1+a_2=c$ with $a_1,a_2\in A$ and $c\in C$ do we expect? If $A$, $C$ are random sets, then we expect $\approx \alpha^2\gamma q^{2n}$ many such solutions, which is to say
\[\langle \mu_A\ast \mu_A,\mu_C\rangle \approx 1.\]
The Kelley-Meka approach begins with some constant discrepancy from this expected count, say $\langle \mu_A\ast \mu_A,\mu_C\rangle \leq 1/2$, and shows that this leads to a large density increment of $A$ on some subspace $V\leq \bbf_q^n$ with codimension $O(\lo{\alpha}^{O(1)})$ (that is, shows that there is such a subspace $V$ and a translate $A'$ of $A$ such that $\mu_V(A')\geq (1+\Omega(1))\alpha$, meaning that $A'$ has significantly larger density in $V$ than $A$ has in $\bbf_q^n$). This is done using mostly physical-based methods, rather than the Fourier-based methods that have dominated the study of three-term progressions thus far.

Our presentation of the Kelley-Meka strategy breaks it down into five key steps. 
\begin{tcolorbox}[enhanced,attach boxed title to top center={yshift=-3mm,yshifttext=-1mm},
colback=blue!5!white,colframe=blue!75!black,
  title=1 : H\"{o}lder lifting,fonttitle=\bfseries,
  boxed title style={size=small,colframe=red!50!black} ]
If $\langle \mu_A\ast \mu_A, \mu_{C}\rangle \leq 1/2$, then $\norm{\mu_A\circ \mu_A-1}_p\geq 1/4$ for some $p\ll \lo{\gamma}$.
\end{tcolorbox}
This is essentially a one-line application of H\"{o}lder's inequality:
\[\tfrac{1}{2}\leq \abs{\langle \mu_A\ast \mu_A-1,\mu_C\rangle} \leq \norm{\mu_A\ast \mu_A-1}_p\norm{\mu_C}_{p/(p-1)}\leq 2\norm{\mu_A\ast \mu_A-1}_p\]
for sufficiently large $p$, since $\norm{\mu_C}_{p/(p-1)}=\gamma^{-1/p}$, and noting that $\norm{\mu_A\ast \mu_A-1}_p\leq \norm{\mu_A\circ \mu_A-1}_p$ when $p$ is an even integer. Although trivial, the passage from few three-term arithmetic progressions to large $L^p$ norm of $\mu_A\circ \mu_A-1$ was rarely used in previous work, most of which begins with the Fourier deduction that $\sum_{\gamma\neq 0} \abs{\widehat{\mu_A}(\gamma)}^2\abs{\widehat{\mu_{C}}(\gamma)}\gg 1$.

This physical H\"{o}lder step was used (along with almost-periodicity) in \cite{BSlog} to achieve density bounds of $\alpha\leq (\log N)^{-1+o(1)}$. Indeed, in a sense the approach of \cite{BSlog} corresponds to carrying out Steps 1, 4, and 5 of the present sketch. The advancement of \cite{BS} past the logarithmic density barrier couples this with delicate structural information on the Fourier side (following seminal ideas of Bateman and Katz \cite{BK}). 

It is incredible that the following two steps, which are simple enough to prove in only a couple of pages, perform far better quantitatively than this elaborate Fourier-side approach.
\begin{tcolorbox}[enhanced,attach boxed title to top center={yshift=-3mm,yshifttext=-1mm},
colback=blue!5!white,colframe=blue!75!black,
  title=2 : Unbalancing,fonttitle=\bfseries,
  boxed title style={size=small,colframe=red!50!black} ]For any $f:G\to \bbr$ such that $\widehat{f}\geq 0$, if $\norm{f}_p\geq 1/4$, then $\norm{f+1}_{p'}\geq 1+1/8$ for some $p'\ll p$. In particular, if $\norm{\mu_A\circ \mu_A-1}_p\geq 1/4$, then $\norm{\mu_A\circ \mu_A}_{p'}\geq 1+1/8$.
\end{tcolorbox}
This step is essential to the success of the Kelly-Meka argument, and rests on the fact that the Fourier transform of $f$ is non-negative. (Note that it is not true for an arbitrary function.) While the spectral non-negativity of $\mu_A\circ \mu_A-1$ has played a role in some arguments before (e.g. in the spectral boosting aspect of \cite{BS}), to our knowledge it has not before been so cleanly expressed, and its potential had not been fully appreciated within additive combinatorics.

We remark that, as pointed out to the authors by Shkredov, an entirely physical proof of this step is possible, with no mention of the Fourier transform at all, if we replace the assumption $\widehat{f}\geq 0$ with $f=g\circ g$ for some function $g:\bbr\to \bbc$ (note that this is indeed satisfied in our application since $\mu_A\circ \mu_A-1=(\mu_A-1)\circ (\mu_A-1)$). We refer to the proof of Lemma~\ref{lem-key-spec} for details.

At this point we digress to note that, instead of following Steps 1 and 2 as Kelley and Meka do, one could obtain the conclusion $\norm{\mu_A\circ \mu_A}_p\geq 1+\Omega(1)$ for some $p\ll \lo{\gamma}$ from the assumption that $\langle \mu_A\ast \mu_A,\mu_C\rangle \leq 1/2$ using more classical Fourier-based methods. (In particular this observation answers \cite[Question A.4]{KM} in the affirmative.) By converting the inner product to Fourier space and applying the triangle inequality we observe that
\[1/2\leq \sum_{\lambda\neq 0}\abs{\widehat{\mu_A}(\lambda)}^2\abs{\widehat{\mu_C}(\lambda)}.\]
It follows that, for some choice of signs $c_\lambda\in\bbc$, we have
\[1+1/2\leq \sum_\lambda \abs{\widehat{\mu_A}(\lambda)}^2\abs{\widehat{\mu_{C}}(\lambda)}=\bbe_{x\in C}\sum_{\lambda}c_\lambda \abs{\widehat{\mu_A}(\lambda)}^2\lambda(-x).\]
Applying H\"{o}lder's inequality to the left-hand side and using orthogonality of characters yields, for any even integer $p$,
\[1+1/2\leq \gamma^{-1/p}\brac{\sum_{\lambda_1,\ldots,\lambda_{p}}c_{\lambda_1}\cdots \overline{c_{\lambda_{p}}}\abs{\widehat{\mu_A}(\lambda_1)}^2\cdots \abs{\widehat{\mu_A}(\lambda_{p})}^21_{\lambda_1+\cdots-\lambda_{p}=0}}^{1/p}.\]
We can discard the signs $c_\lambda$ by the triangle inequality, and then by orthogonality the sum here is in fact equal to $\norm{\mu_A\circ \mu_A}_p$, and we are done choosing $p$ suitably large. This sort of step has already played a major role in previous Fourier-based approaches to Roth's theorem, although generally with the $\widehat{\mu_A}$ restricted to some `large spectrum', where it then yields information about the additive relations within this large spectrum. A striking feature of the work of Kelley and Meka is that this information is far more useful on the physical side.

We end our digression here and return to the sketch.

\begin{tcolorbox}[enhanced,attach boxed title to top center={yshift=-3mm,yshifttext=-1mm},
colback=blue!5!white,colframe=blue!75!black,
  title=3 : Dependent random choice,fonttitle=\bfseries,
  boxed title style={size=small,colframe=red!50!black} ]If $\norm{\mu_A\circ \mu_A}_p\geq 1+1/8$, then there are $A_1,A_2\subseteq A$ of density at least $\alpha^{O(p)}$ such that 
\[\langle \mu_{A_1}\circ \mu_{A_2}, \ind{S}\rangle \geq 1-1/32\]
where $S=\{ x : \mu_A\circ \mu_A(x) > 1+1/16\}$.
\end{tcolorbox}
This is, in our opinion, the most important step (although of course every step is necessary, and in particular the previous unbalancing step is crucial to obtaining the information required for this step). In combination with the previous two steps, we have now converted the original three variable deficiency information of $\langle \mu_A\ast \mu_A,\mu_C\rangle\leq 1/2$ into four variable abundancy information $\langle \mu_{A_1}\circ \mu_{A_2},\mu_A\circ \mu_A\rangle \geq 1+\Omega(1)$. This is very promising, since Schoen and the second author \cite{SS} have already shown that almost-periodicity can prove quasi-polynomial bounds (that is, of the same shape as the Kelley-Meka bound) for 4 variable equations. Another indication of the strength of the conclusion obtained in Step 3 is that Sanders \cite{Sa2} has shown (also using almost-periodicity) that $S$ contains a large proportion of a large subspace with codimension $O(\lo{\alpha}^{O(1)})$.

The proof of this step, called sifting in \cite{KM}, is completely elementary, and uses dependent random choice -- one takes $A_i$ to be the intersection of $p$ randomly chosen translates of $A$. A simple expectation calculation combined with the $L^p$ information then verifies that there must exist some choice of translates which satisfy both the density conditions and the inner product condition. 

Arguments of this kind have appeared before in additive combinatorics, dating back in some form to Gowers' proof of Szemer\'{e}di's theorem \cite{Go}. For example, when $p=2$ this method was used by Schoen \cite{Sc} to prove strong bounds for the Balog-Szemer\'{e}di-Gowers theorem, and similar manipulations for larger $p$ have played an extensive role in work of Schoen and Shkredov (see for example \cite{ScSh1,Sh}). A very similar statement also appears in work of Sanders \cite[Lemma 1.9]{Sa4}, itself a generalisation of an argument of Gowers \cite[Lemma 11]{Go}.

Despite this previous work, the true potential of this method (when coupled with the powerful technique of almost-periodicity) in applications to the study of three-term progressions and related problems had been overlooked before the breakthrough of Kelley and Meka.

\begin{tcolorbox}[enhanced,attach boxed title to top center={yshift=-3mm,yshifttext=-1mm},
colback=blue!5!white,colframe=blue!75!black,
  title=4 : Almost periodicity,fonttitle=\bfseries,
  boxed title style={size=small,colframe=red!50!black} ]For any sets $A_1,A_2,S\subseteq\bbf_q^n$, if $A_1,A_2$ have density at least $\alpha$, then there is a subspace $V$ with codimension $O(\lo{\alpha}^4)$ such that
\[\abs{\langle \mu_V\ast \mu_{A_1}\circ \mu_{A_2},\ind{S}\rangle - \langle \mu_{A_1}\circ \mu_{A_2},\ind{S}\rangle }\leq 1/100.\]
\end{tcolorbox}

Almost-periodicity statements of this type have played an important role in additive combinatorics since their introduction by Croot and the second author \cite{CS}, most notably in the work of Sanders achieving quasi-polynomial bounds for inverse sumset theorems \cite{Sa2,Sa3}. The conventional wisdom was that, despite their success in inverse sumset problems and for translation invariant equations in four or more variables (see \cite{SS} and the earlier \cite{ScSh2} for six or more variables), they were not able to achieve significant results for three-term arithmetic progressions. (Although the argument of \cite{BS} did make fundamental use of almost-periodicity, most of the work in that paper was Fourier-analytic.) Kelley and Meka have dispelled this illusion completely.

 It should be noted, however, that there is no novelty in the actual form of almost-periodicity used by Kelley and Meka -- the new strength is a result of the context in which they use it.

\begin{tcolorbox}[enhanced,attach boxed title to top center={yshift=-3mm,yshifttext=-1mm},
colback=blue!5!white,colframe=blue!75!black,
  title=5 : Density increment,fonttitle=\bfseries,
  boxed title style={size=small,colframe=red!50!black} ]If $\langle \mu_A\ast \mu_A,\mu_{C}\rangle \leq 1/2$, then there is an affine subspace $V$ of codimension $O(\lo{\alpha}^4\lo{\gamma}^4)$ on which $A$ has density at least $(1+\tfrac{1}{100})\alpha$.
\end{tcolorbox}

This is just a trivial combination of the previous 4 steps (noting that the density increment condition can also be phrased as $\norm{\mu_A\ast \mu_V}_\infty \geq 1+1/100$). This density increment condition can now be iteratively applied to eventually obtain a lower bound for $\langle \mu_{A'}\ast \mu_{A'},\mu_C\rangle$ (with $A'$ now perhaps some subset of a translate of $A$).

For example, the deduction of Theorem~\ref{th-main-ff} is routine with $C=2\cdot A$. Indeed, a density increment such as $\alpha\mapsto (1+\Omega(1))\alpha$ can occur at most $O(\lo{\alpha})$ many times, after which we must halt with many three-term arithmetic progressions found in the intersection of $A$ with some affine subspace, the codimension of which is bounded above by $O(\lo{\alpha}^9)$. 

Alternatively, carrying through these steps with $C$ being the complement of $A+A$ leads to the following.

\begin{theorem}[Kelley-Meka]
If $A\subseteq \bbf_q^n$ has density $\alpha$ and $\gamma\in(0,1]$, then there is some affine subspace $V\leq \bbf_q^n$ of codimension $O(\lo{\alpha}^5\lo{\gamma}^4)$ such that 
\[\abs{(A+A)\cap V}\geq (1-\gamma)\abs{V}.\]
\end{theorem}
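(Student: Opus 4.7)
The plan is to run a density increment iteration powered by Step 5, at each stage taking the auxiliary set to be the complement of the sumset of the current (translated) restriction of $A$, and halting as soon as the sumset already fills at least a $(1-\gamma)$-fraction of the ambient subspace.

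Concretely, I would maintain at stage $i$ the data of a linear subspace $W_i\leq \bbf_q^n$, a shift $\tau_i\in \bbf_q^n$, and a set $A_i\subseteq W_i$ satisfying $A_i+\tau_i\subseteq A$ and $\alpha_i:=\abs{A_i}/\abs{W_i}\geq \alpha$, starting from $W_0=\bbf_q^n$, $\tau_0=0$, $A_0=A$. At each stage one tests whether $A_i+A_i\subseteq W_i$ (computed inside $W_i$, which we identify with $\bbf_q^{m_i}$ for $m_i=\dim W_i$) already satisfies $\abs{A_i+A_i}\geq (1-\gamma)\abs{W_i}$. If so, halt and output $V:=W_i+2\tau_i$: because $A_i+\tau_i\subseteq A$ gives $(A_i+A_i)+2\tau_i\subseteq A+A$, and $(A_i+A_i)+2\tau_i\subseteq V$, we conclude $\abs{(A+A)\cap V}\geq \abs{A_i+A_i}\geq (1-\gamma)\abs{V}$, with the codimension of $V$ in $\bbf_q^n$ equal to the codimension of $W_i$.

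Otherwise the complement $C_i:=W_i\setminus(A_i+A_i)$ has density $\gamma_i>\gamma$ in $W_i$ and is disjoint from $A_i+A_i$, so $\langle \mu_{A_i}\ast\mu_{A_i},\mu_{C_i}\rangle = 0 \leq 1/2$ when computed in $W_i$. Applying Step 5 with $W_i$ as the ambient group and $(A_i,C_i)$ playing the role of $(A,C)$, one obtains an affine subspace $V'=s+W_{i+1}\subseteq W_i$ of codimension in $W_i$ at most $O(\lo{\alpha_i}^4\lo{\gamma_i}^4)\leq O(\lo{\alpha}^4\lo{\gamma}^4)$ on which $A_i$ has relative density at least $(1+\tfrac{1}{100})\alpha_i$. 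Setting $\tau_{i+1}:=\tau_i+s$ and $A_{i+1}:=(A_i-s)\cap W_{i+1}$ preserves the invariants $A_{i+1}+\tau_{i+1}\subseteq A$ and $\alpha_{i+1}\geq (1+\tfrac{1}{100})\alpha_i$, and the iteration continues.

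Since $\alpha_i\geq (1+\tfrac{1}{100})^i\alpha$ and relative densities never exceed $1$, the iteration terminates after at most $O(\lo{\alpha})$ stages, so the cumulative codimension of $W_i$ in $\bbf_q^n$ is at most $O(\lo{\alpha})\cdot O(\lo{\alpha}^4\lo{\gamma}^4)=O(\lo{\alpha}^5\lo{\gamma}^4)$, matching the target bound. All of the analytic content is packaged inside Step 5, which applies verbatim because $W_i$ is itself an $\bbf_q$-vector space; the only mild obstacle is administrative, namely tracking the accumulated translate $\tau_i$ carefully enough that the sumset statement proved for $A_i$ inside $W_i$ pulls back to the required statement $\abs{(A+A)\cap V}\geq (1-\gamma)\abs{V}$ in the original group $\bbf_q^n$.
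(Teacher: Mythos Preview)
Your proposal is correct and follows exactly the approach the paper indicates: the paper states this theorem immediately after Step 5 with the remark ``carrying through these steps with $C$ being the complement of $A+A$ leads to the following,'' and your iteration does precisely that, applying Proposition~\ref{th-ip} inside the current subspace $W_i$ with $C_i=W_i\setminus(A_i+A_i)$ and accumulating $O(\lo{\alpha})$ density increments each of codimension $O(\lo{\alpha}^4\lo{\gamma}^4)$. The bookkeeping with the translates $\tau_i$ and the termination argument are handled correctly.
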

For comparison, the best bound previously available, due to Sanders \cite{Sa2}, had codimension $O(\lo{\alpha}^4\gamma^{-2})$. The latter is slightly better when $\gamma$ is constant (as was the case of interest in \cite{Sa2}) but much weaker when $\gamma\approx \alpha$, which is the regime of interest for three-term arithmetic progressions.

\section{The key new lemmas}\label{sec-key}

In this section we prove general forms of Steps 2 and 3 that will be used for both the $\mathbb{F}_q^n$ and integer case.

\subsection{Unbalancing of spectrally non-negative functions}

The following precise form of Step 2 will suffice for all our applications.
\begin{lemma}\label{lem-key-spec}
Let $\epsilon\in(0,1)$ and $\nu:G\to\bbr_{\geq 0}$ satisfy $\norm{\nu}_1=1$ and $\widehat{\nu}\geq 0$. Let $f:G\to\bbr$ be such that $\widehat{f}\geq 0$. (Or, alternatively, assume that $f=g\circ g$ and $\nu=h\circ h$ for some $g,h:G\to\bbc$ .)

If $\norm{f}_{p(\nu)}\geq \epsilon$ for some $p\geq 1$, then $\norm{f+1}_{p'(\nu)}\geq 1+\tfrac{1}{2}\epsilon$ for some $p'\ll_\epsilon p$.
\end{lemma}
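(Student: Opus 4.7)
The plan is to expand $\|f+1\|_{p'(\nu)}^{p'}$ via the binomial theorem, exploit the spectral non-negativity hypotheses to show that \emph{every} term in the expansion is non-negative, and then retain only the handful of terms that can be bounded below using $\|f\|_{p(\nu)}\ge\epsilon$.

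The starting observation is that for every non-negative integer $k$,
\[
\bbe_x\nu(x)f(x)^k=\sum_\gamma \widehat{f^k}(\gamma)\widehat{\nu}(\gamma)\ge 0,
\]
because $\widehat{f^k}=\widehat f\ast\cdots\ast\widehat f$ (a $k$-fold convolution of non-negative functions on $\widehat G$) is non-negative, and $\widehat\nu\ge 0$ is real. For convenience I would replace $p$ by the next even integer (at most tripling $p$ since $p\ge 1$, absorbed into the $\ll_\epsilon$ using that $L^{p(\nu)}$-norms are monotone in $p$ for a probability measure $\nu$) and take $p'$ to be an even integer too. Then for every even integer $k\ge p$,
\[
\bbe_x\nu(x)f(x)^k=\|f\|_{k(\nu)}^k\ge \|f\|_{p(\nu)}^k\ge\epsilon^k.
\]

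Expanding
\[
\|f+1\|_{p'(\nu)}^{p'}=\sum_{k=0}^{p'}\binom{p'}{k}\bbe_x\nu(x)f(x)^k,
\]
every summand is non-negative, so I discard all but the even $k\ge p$ terms to get
\[
\|f+1\|_{p'(\nu)}^{p'}\ge \sum_{\substack{p\le k\le p'\\ k\text{ even}}}\binom{p'}{k}\epsilon^k=(1+\epsilon)^{p'}\,\mathbb{P}\!\left[X\ge p\ \text{and}\ X\text{ even}\right],
\]
where $X\sim\mathrm{Bin}(p',\epsilon/(1+\epsilon))$ has mean $p'\epsilon/(1+\epsilon)$. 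Choosing $p'=C(\epsilon)p$ with $C(\epsilon)=\Theta(1/\epsilon)$ sufficiently large makes $\mathbb EX\gg p$, so Chernoff gives $\mathbb P[X\ge p]\ge 1-o(1)$; and directly $\mathbb P[X\text{ even}]=\tfrac12(1+((1-\epsilon)/(1+\epsilon))^{p'})\ge \tfrac12-o(1)$. Hence the probability is bounded below by some absolute $c_0>0$, and taking $p'$-th roots yields
\[
\|f+1\|_{p'(\nu)}\ge (1+\epsilon)\,c_0^{1/p'}\ge 1+\tfrac12\epsilon
\]
as soon as $p'$ is additionally $\gtrsim 1/\epsilon$. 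Both constraints are met by $p'=C(\epsilon)p\ll_\epsilon p$ since $p\ge 1$.

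The only (modest) technical point is the joint choice of $p'$: it has to be both $\gtrsim p/\epsilon$, so that the Chernoff tail is negligible, and $\gtrsim 1/\epsilon$, so that $c_0^{1/p'}$ is close enough to $1$ that $(1+\epsilon)c_0^{1/p'}$ still exceeds $1+\epsilon/2$. These are easily arranged simultaneously; no conceptual surprises are expected beyond the clean binomial/Chernoff bookkeeping.
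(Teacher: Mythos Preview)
Your proof is correct, and it takes a genuinely different route from the one in the paper. The paper argues via a level-set decomposition: it uses the single moment inequality $\bbe_\nu f^p\ge 0$ (for odd $p$) to show that the positive part of $f$ carries most of $\|f\|_{p(\nu)}^p$, extracts a set $T$ on which $f\ge \tfrac34\epsilon$ with $\nu(T)\ge(\epsilon/10)^{2p}$, and then lower-bounds $\|f+1\|_{p'(\nu)}$ by restricting to $T$. You instead exploit positivity of \emph{all} moments $\bbe_\nu f^k$ simultaneously: after a binomial expansion of $\|f+1\|_{p'(\nu)}^{p'}$, every term is non-negative, and the even-$k$ terms with $k\ge p$ already contribute $(1+\epsilon)^{p'}$ times a binomial tail probability, which Chernoff controls.

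What each approach buys: your argument is exactly the one the paper attributes to Kelley and Meka (the remark following the lemma statement), and it yields the sharper dependence $p'\ll \epsilon^{-1}p$, whereas the paper's level-set proof gives $p'\ll \epsilon^{-1}\log(\epsilon^{-1})p$. The paper's argument, on the other hand, is more self-contained, avoiding any external concentration input. One minor point: you state $\mathbb P[X\ge p]$ and $\mathbb P[X\text{ even}]$ as separate bounds but need the joint event; the inequality $\mathbb P[X\ge p,\ X\text{ even}]\ge \mathbb P[X\text{ even}]-\mathbb P[X<p]\ge \tfrac12-o(1)$ makes this explicit (and in fact $\mathbb P[X\text{ even}]\ge\tfrac12$ exactly, since $\epsilon<1$).
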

We have left the dependence on $\epsilon$ unspecified since our applications only use $\epsilon\gg 1$. The proof below delivers $p'\ll \epsilon^{-1}\log(\epsilon^{-1})p$. Kelley and Meka use a different method (see \cite[Appendix D]{KM}) which is more efficient, giving $p'\ll \epsilon^{-1}p$, but requires use of an external (though simple) fact about the binomial distribution.
\begin{proof}
We first establish the important fact that, for any integer $k\geq 1$,
\begin{equation}\label{eq-nonneg}\langle \nu, f^k\rangle \geq 0.
\end{equation}
We will give two proofs of \eqref{eq-nonneg}, depending on whether the Fourier assumption $\widehat{f},\widehat{\nu}\geq 0$ or the physical assumption $f=g\circ g$ and $\nu=h\circ h$ is used. Given \eqref{eq-nonneg} no further use of the Fourier transform is required, and the proofs converge.

The Fourier proof of \eqref{eq-nonneg}, which is used by Kelley and Meka, is immediate from Parseval's identity:
\[\langle \nu,f^{k}\rangle=\langle \widehat{\nu},\widehat{f}^{(k)}\rangle,\]
where we recall that $f^{(k)} = f*f*\cdots*f$ denotes the $k$-fold convolution, and the right-hand side is non-negative since $\widehat{f},\widehat{\nu}\geq 0$.

We now present the alternative physical proof of \eqref{eq-nonneg}, assuming $f=g\circ g$ and $\nu=h\circ h$. This argument was pointed out to the authors by Shkredov, who has made extensive use of the following kind of manipulations (for example in \cite{Sh}). We observe that
\begin{align*}
\langle \nu, f^k\rangle
&=
\bbe_x h\circ h(x)g\circ g(x)^k\\
&= \bbe_{y_1,y_2}h(y_1)\overline{h(y_2)}\brac{\bbe_z g(y_1+z)\overline{g(y_2+z)}}^k\\
&=\bbe_{z_1,\ldots,z_k}\bbe_{y_1,y_2}h(y_1)\overline{h(y_2)}g(y_1+z_1)\cdots \overline{g(y_2+z_k)}\\
&= \bbe_{z_1,\ldots,z_k}\abs{\bbe_y h(y)g(y+z_1)\cdots g(y+z_k)}^2,
\end{align*}
which is clearly non-negative as each summand is.

We now show how \eqref{eq-nonneg} implies the conclusion. Without loss of generality we can assume that $p\geq 5$ is an odd integer. Using \eqref{eq-nonneg}, since $2\max(x,0)=x+\abs{x}$ for $x\in\bbr$ and $f^{p-1}=\abs{f}^{p-1}$, we have 
\[2\langle \max(f,0),f^{p-1}\rangle_\nu=\langle \nu,f^p\rangle+\langle \abs{f},f^{p-1}\rangle_\nu\geq \norm{f}_{p(\nu)}^p\geq \epsilon^p.\]
Therefore, if $P=\{ x : f(x) \geq 0\}$, then $\langle \ind{P},f^{p}\rangle_\nu\geq \frac{1}{2}\epsilon^{p}$. Furthermore, if $T=\{ x\in P : f(x) \geq \tfrac{3}{4}\epsilon\}$, then $\langle \ind{P\backslash T},f^p\rangle_\nu <(\tfrac{3}{4}\epsilon)^p\leq \tfrac{1}{4}\epsilon^p$, and hence by the Cauchy-Schwarz inequality
\[\nu(T)^{1/2}\norm{f}_{2p(\nu)}^{p}\geq \langle \ind{T}, f^{p}\rangle_\nu \geq \tfrac{1}{4}\epsilon^{p}.\]
On the other hand, by the triangle inequality
\[\norm{f}_{2p(\nu)}\leq 1+\norm{f+1}_{2p(\nu)}\leq 3,\]
or else we are done, with $p'=2p$. Hence $\nu(T)\geq (\epsilon/10)^{2p}$. It follows that, for any $p'\geq 1$,
\[\norm{f+1}_{p'(\nu)}\geq \langle \ind{T},\abs{f+1}^{p'}\rangle_\nu^{1/p'}\geq  (1+\tfrac{3}{4}\epsilon)(\epsilon/10)^{2p/p'}.\]
The desired bound now follows if we choose $p'$ a sufficiently large multiple (depending on $\epsilon$) of $p$.
\end{proof}

\subsection{An application of dependent random choice}
We now use dependent random choice (or what Kelley and Meka call `sifting') to prove a general form of Step 3. This makes use of (a generalisation of) the fact that
\[ \norm{ 1_A \circ 1_A }_p^p = \bbe_{s_1,\ldots,s_{p} \in G} \mu((A+s_1) \cap \cdots \cap (A+s_{p}))^2 \] 
to convert $L^p$-information about a convolution to information about the nested intersections appearing in the right-hand side. This identity features extensively in some works of Shkredov (see \cite{Sh} for example) and Schoen-Shkredov \cite{ScSh2}, and is already implicitly used in work of Sanders \cite[Lemma 1.9]{Sa4}, but its strength and utility in the current context was far from apparent before the work of Kelley and Meka. At a first reading of the following result the reader may wish to take $B_1=B_2=G$, in which case $\mu=\mu_{B_1}\circ \mu_{B_2}$ is just the usual uniform measure on $G$.

\begin{lemma}\label{lem-sift}
Let $p\geq 1$ be an integer and $\epsilon,\delta>0$. Let $B_1,B_2\subseteq G$, and let $\mu=\mu_{B_1}\circ\mu_{B_2}$. For any finite set $A\subseteq G$ with density $\alpha$, if 
\[S=\{ x\in G: \mu_A\circ \mu_A(x)>(1-\epsilon)\norm{\mu_A\circ \mu_A}_{p(\mu)}\},\]
then there are $A_1\subseteq B_1$ and $A_2\subseteq B_2$ such that 
\[\langle \mu_{A_1}\circ\mu_{A_2},\ind{S}\rangle \geq 1-\delta\]
and
\[\min\brac{\frac{\abs{A_1}}{\abs{B_1}},\frac{\abs{A_2}}{\abs{B_2}}}\gg \brac{\alpha\norm{\mu_A\circ \mu_A}_{p(\mu)}}^{2p+O_{\epsilon,\delta}(1)}.\]
\end{lemma}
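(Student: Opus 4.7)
The plan is to use sifting (dependent random choice) in the style of \cite[Lemma 1.9]{Sa4}. Set $p^\ast = p + q$ with $q = O_{\epsilon,\delta}(1)$ to be chosen, sample $\vec s = (s_1, \ldots, s_{p^\ast}) \in G^{p^\ast}$ uniformly, and define
\[ A_i(\vec s) = B_i \cap \bigcap_{j=1}^{p^\ast} (A + s_j) \qquad (i=1,2), \]
so that $u \in A_i(\vec s)$ iff $u \in B_i$ and $u - s_j \in A$ for every $j$. Write $X = \mu_{B_1}(A_1)$, $Y = \mu_{B_2}(A_2)$, and $N = \abs{\{(a_1,a_2) \in A_1\times A_2 : a_2 - a_1 \notin S\}}$; all are functions of $\vec s$.

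The first step is the key identity, generalising the one mentioned just before the lemma:
\[ \bbe_{\vec s}[XY] = \alpha^{2p^\ast} \norm{\mu_A \circ \mu_A}_{p^\ast(\mu)}^{p^\ast} \geq \alpha^{2p^\ast} M^{p^\ast}, \]
where $M = \norm{\mu_A \circ \mu_A}_{p(\mu)}$. This follows by computing $\mathbb{P}_{s \in G}(u - s, v - s \in A) = \alpha^{2}(\mu_A \circ \mu_A)(v - u)$ for each coordinate, multiplying over the $p^\ast$ independent coordinates, and using the identity $\bbe_{u,v}\mu_{B_1}(u)\mu_{B_2}(v) g(v - u) = \bbe_b \mu(b) g(b)$ coming from $\mu = \mu_{B_1} \circ \mu_{B_2}$; the inequality is monotonicity of $L^p$ norms on a probability space.

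The second step controls the bad configurations. Splitting $(\mu_A \circ \mu_A)^{p^\ast} = (\mu_A \circ \mu_A)^q \cdot (\mu_A \circ \mu_A)^p$ and using the pointwise bound $\mu_A \circ \mu_A \leq (1 - \epsilon) M$ on $S^c$ for the first factor gives
\[ \bbe_{\vec s}\bigl[N/(\abs{B_1}\abs{B_2})\bigr] \leq (1 - \epsilon)^q \alpha^{2p^\ast} M^{p^\ast}. \]
Choosing $q$ large enough (depending on $\epsilon, \delta$) so that $(1-\epsilon)^q \leq c\delta$, and then applying a reverse-Markov/Paley--Zygmund argument to $W = XY - (2/\delta) N/(\abs{B_1}\abs{B_2})$ (whose expectation is $\geq \tfrac{1}{2}\alpha^{2p^\ast} M^{p^\ast}$ and which satisfies $W \leq 1$), produces a specific $\vec s$ satisfying both $XY \gg \alpha^{2p^\ast}M^{p^\ast}$ and $N \leq \tfrac{\delta}{2}\abs{A_1}\abs{A_2}$. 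The latter bound is precisely $\langle \mu_{A_1} \circ \mu_{A_2}, \ind{S} \rangle \geq 1 - \delta$.

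The main obstacle I anticipate is extracting the individual-density bound $\min(X, Y) \gg (\alpha M)^{p + O_{\epsilon,\delta}(1)}$: the crude estimate $\min \geq XY$ loses an extra factor of $\alpha^{p^\ast}$ compared to the target. I expect this to be resolved by exploiting the positive correlation between $X$ and $Y$. Conditioning on the typicality event $E = \{X \geq \tfrac{1}{2}\alpha^{p^\ast}\}$, which has probability $\gtrsim \alpha^{p^\ast}$ by reverse Markov on $\bbe X = \alpha^{p^\ast}$, and splitting $\bbe[XY] = \bbe[XY \ind{E}] + \bbe[XY \ind{E^c}]$ with the second term pointwise bounded by $\tfrac{1}{2}\alpha^{p^\ast}\bbe Y = \tfrac{1}{2}\alpha^{2p^\ast}$, we get $\bbe[Y \ind{E}] \gtrsim \alpha^{2p^\ast}M^{p^\ast}$, so that the conditional expectation of $Y$ on $E$ is boosted to the scale $(\alpha M)^{p^\ast}$. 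A further reverse-Markov step inside $E$, combined with the symmetric treatment swapping the roles of $X$ and $Y$ (and intersected with the set of good $\vec s$ produced by the previous step), should then localise a single $\vec s$ delivering the required density bound on both sides simultaneously.
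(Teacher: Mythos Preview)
Your core argument through the $W = XY - (2/\delta)N/(\abs{B_1}\abs{B_2})$ step is correct and follows the same dependent-random-choice approach as the paper. The paper packages this as a general lemma (Lemma~\ref{lem-drc}) for an arbitrary non-negative test function $f$ and then specialises to $f = \ind{G\setminus S}$, and it combines the density and correlation requirements via a Cauchy--Schwarz thresholding step (restricting to $\{\alpha_1(s)\alpha_2(s)\geq M_0^2\}$ and showing the complement carries at most half of $\bbe_s[\alpha_1\alpha_2]$) rather than your linear-combination trick; these are minor variations of the same idea.

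The problematic part is your final paragraph. The claim that the conditional expectation of $Y$ on $E$ is boosted to scale $(\alpha M)^{p^\ast}$ would require an \emph{upper} bound $\mathbb{P}(E)\lesssim \alpha^{p^\ast}$, whereas reverse Markov supplies only the lower bound, and Markov on $\bbe X=\alpha^{p^\ast}$ gives nothing useful. So this step has a genuine gap, and the subsequent ``intersect with the good $\vec s$'' sketch is too vague to repair it.

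The good news is that you need not repair it. The paper's own proof does \emph{not} obtain the stated exponent $p+O_{\epsilon,\delta}(1)$ either: Lemma~\ref{lem-drc} gives $\min(\mu_{B_1}(A_1),\mu_{B_2}(A_2))\geq\tfrac14(\alpha M)^{2p'}$, and the deduction relies on precisely the crude $\min(X,Y)\geq XY$ that you flagged, yielding exponent $2p+O_{\epsilon,\delta}(1)$. The lemma's stated exponent is slightly optimistic; for every application in the paper one only needs $\log(1/\min)\ll p\,\lo{\alpha M}$, which both your bound $XY\gg \alpha^{2p^\ast}M^{p^\ast}$ and the paper's $(\alpha M)^{2p'}$ already deliver. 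Drop the conditioning paragraph and accept exponent $2p^\ast$.
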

Again, since we will apply this only in the case $\epsilon,\delta\gg 1$, we are not concerned with the behaviour of the $O_{\epsilon,\delta}(1)$ term, although we record here that the proof (which is identical to that in \cite{KM}) in fact allows for $O(\epsilon^{-1}\log(\delta^{-1}))$. We furthermore note that $A_i$ will take the form $B_i \cap (A+s_1) \cap \cdots \cap (A+s_p)$ for some randomly chosen shifts $s_j \in G$.

We shall prove Lemma \ref{lem-sift} after Lemma \ref{lem-drc} below, but first we note the following immediate special case, which is all we use when studying $\mathbb{F}_q^n$.

\begin{corollary}\label{cor-key}
Let $p\geq 1$ be an integer and $\epsilon>0$. If $A\subseteq G$ is such that $\norm{\mu_A\circ \mu_A}_p \geq 1+\epsilon$ and $S=\{ x : \mu_A\circ \mu_A(x)>1+\epsilon/2\}$, then there are $A_1,A_2\subseteq G$, both of density
\[\gg \alpha^{2p+O_\epsilon(1)},\]
such that
\[\langle \mu_{A_1}\circ \mu_{A_2},\ind{S}\rangle \geq 1-\epsilon/8.\]
\end{corollary}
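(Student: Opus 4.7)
The plan is to derive Corollary \ref{cor-key} as an immediate specialization of Lemma \ref{lem-sift} with $B_1=B_2=G$. First I would check that the setup collapses correctly: since $\mu_G\equiv 1$ as a pointwise function on $G$ (because $G$ has density $1$ in itself), the probability measure $\mu := \mu_G\circ\mu_G$ is just the constant $1$, so $\norm{\cdot}_{p(\mu)}$ coincides with the ordinary $\norm{\cdot}_p$, and the relative densities $\mu_{B_i}(A_i)$ are exactly the usual densities of $A_i$ in $G$ that appear in the statement of the corollary.

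With those identifications, write $M := \norm{\mu_A\circ\mu_A}_p$, which by hypothesis satisfies $M\geq 1+\epsilon$. The next step is to choose the parameters $\epsilon''$ and $\delta''$ fed into Lemma \ref{lem-sift} so that (i) its set $S''=\{x : \mu_A\circ\mu_A(x) > (1-\epsilon'')M\}$ is contained in our target $S=\{x : \mu_A\circ\mu_A(x) > 1+\epsilon/2\}$, and (ii) its conclusion $\langle \mu_{A_1}\circ\mu_{A_2},\ind{S''}\rangle \geq 1-\delta''$ already gives the required $1-\epsilon/8$. For (i), since $M\geq 1+\epsilon$, picking a suitable constant multiple such as $\epsilon''=\epsilon/\bigl(4(1+\epsilon)\bigr)$ ensures $(1-\epsilon'')M \geq 1+\epsilon/2$, so $S''\subseteq S$; for (ii), take $\delta''=\epsilon/8$. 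Monotonicity of the inner product in the indicator then transfers the bound from $\ind{S''}$ to $\ind{S}$.

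Finally, to match the advertised form of the density bound, I would observe that $\bbe(\mu_A\circ\mu_A) = 1$, so Jensen's inequality gives $M\geq 1$ for any $p\geq 1$. Consequently the lemma's output lower bound $(\alpha M)^{p+O_{\epsilon'',\delta''}(1)}$ is at least $\alpha^{p+O_\epsilon(1)}$, which is exactly the density conclusion we want. There is no genuine obstacle at this step: the corollary is essentially a renaming of parameters, and all of the substantive content, in particular the dependent random choice / sifting argument, sits inside Lemma \ref{lem-sift}, whose proof is deferred. The only small points requiring attention are verifying that $\mu_G\equiv 1$ so the weighted norm reduces to the unweighted one, and using $M\geq 1$ to absorb the $M^{p+O(1)}$ factor cleanly into the constant hidden in $O_\epsilon(1)$.
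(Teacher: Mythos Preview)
Your proposal is correct and follows exactly the approach indicated in the paper, which simply notes that the corollary is an immediate special case of Lemma~\ref{lem-sift} with $B_1=B_2=G$. You have carefully filled in the parameter matching (choosing $\epsilon''$ so that $(1-\epsilon'')M\geq 1+\epsilon/2$ and $\delta''=\epsilon/8$) and the minor observations (that $\mu_G\circ\mu_G\equiv 1$, that $S''\subseteq S$ allows monotonicity in the inner product, and that $M\geq 1$ absorbs the extra factor in the density bound), all of which the paper leaves implicit.
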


We encode the dependent random choice argument underpinning Lemma \ref{lem-sift} as the following general lemma.

\begin{lemma}\label{lem-drc}
Let $p\geq 2$ be an even integer. Let $B_1,B_2\subseteq G$ and $\mu=\mu_{B_1}\circ\mu_{B_2}$. For any finite set $A\subseteq G$ with density $\alpha$ and function $f:G\to\bbr_{\geq 0}$ there exist $A_1\subseteq B_1$ and $A_2\subseteq B_2$ such that
\[\langle \mu_{A_1}\circ \mu_{A_2}, f\rangle\leq 2\frac{\langle (\mu_A\circ \mu_A)^p,f\rangle_\mu}{\norm{\mu_A\circ \mu_A}_{p(\mu)}^p}\]
and
\[\min\brac{\frac{\abs{A_1}}{\abs{B_1}},\frac{\abs{A_2}}{\abs{B_2}}}\geq \frac{1}{4}\alpha^{2p}\norm{\mu_A\circ \mu_A}_{p(\mu)}^{2p}.\]
\end{lemma}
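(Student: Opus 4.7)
The plan is a dependent random choice with \emph{shared} shifts. Sample $s_1,\ldots,s_p \in G$ independently and uniformly, and set
\[ A_1 := B_1 \cap (A+s_1) \cap \cdots \cap (A+s_p), \qquad A_2 := B_2 \cap (A+s_1) \cap \cdots \cap (A+s_p), \]
using the same $s_j$'s on both sides. This coupling is the essential point: independent shifts would decouple the relevant expectations and yield only $\bbe\abs{A_i}/\abs{B_i}=\alpha^p$, losing all information about $\mu_A\circ\mu_A$.

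The main computation is the first-moment identity for fixed $y, z \in G$,
\[ \bbe_s\bigl[\ind{A_1}(y)\ind{A_2}(z)\bigr] = \ind{B_1}(y)\ind{B_2}(z)\bigl(\alpha^2\mu_A\circ\mu_A(z-y)\bigr)^p, \]
which comes from writing $\ind{A_i}(w) = \ind{B_i}(w)\prod_j \ind{A}(w-s_j)$ and observing $\bbe_{s_j}[\ind{A}(y-s_j)\ind{A}(z-s_j)] = \alpha^2\mu_A\circ\mu_A(z-y)$ by a change of variable. Writing $\alpha_i = \abs{A_i}/\abs{G}$ and $\beta_i = \abs{B_i}/\abs{G}$, averaging over $y,z$, substituting $x=z-y$, and using $\bbe_y \ind{B_1}(y)\ind{B_2}(y+x) = \beta_1\beta_2\mu(x)$, one obtains
\[ \bbe_s\bigl[\alpha_1\alpha_2\inn{\mu_{A_1}\circ\mu_{A_2}, f}\bigr] = \alpha^{2p}\beta_1\beta_2\inn{(\mu_A\circ\mu_A)^p, f}_\mu, \]
with the special case $f\equiv 1$ giving $\bbe_s[\alpha_1\alpha_2] = \alpha^{2p}\beta_1\beta_2\norm{\mu_A\circ\mu_A}_{p(\mu)}^p$.

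The final step is a two-stage averaging argument. Put $W = \alpha_1\alpha_2$, $Z = \inn{\mu_{A_1}\circ\mu_{A_2}, f}$ and $L = \inn{(\mu_A\circ\mu_A)^p, f}_\mu / \norm{\mu_A\circ\mu_A}_{p(\mu)}^p$, so the identities read $\bbe[WZ] = L\bbe W$ and hence $\bbe[W(2L-Z)] = L\bbe W\geq 0$. Splitting by the sign of $2L-Z$ gives $\bbe[W\ind{Z\leq 2L}]\geq \tfrac12\bbe W$; a further truncation using the trivial $W\leq \beta_1\beta_2$ shows that the joint event $\{Z\leq 2L,\, W\geq \tfrac14\bbe W\}$ has positive probability. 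Fixing any such realisation, $Z\leq 2L$ is exactly the first conclusion, and the product bound on $W$ combined with $\alpha_{3-i}\leq\beta_{3-i}$ (since $A_{3-i}\subseteq B_{3-i}$) yields simultaneous individual lower bounds on $\mu_{B_i}(A_i)=\alpha_i/\beta_i$. The main obstacle is really only the careful setup with common shifts so that the first-moment computation produces the clean $\mu$-weighted $L^p$ norm on the right-hand side; once this is in place, everything else is either a routine expectation calculation or a standard Markov/truncation argument, with the conversion from a product bound to individual bounds being the final small piece of bookkeeping.
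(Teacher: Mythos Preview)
Your setup and first-moment computation are exactly the paper's: shared shifts $s_1,\ldots,s_p$, the identity $\bbe_s[\ind{A_1}(y)\ind{A_2}(z)]=\ind{B_1}(y)\ind{B_2}(z)(\alpha^2\mu_A\circ\mu_A(z-y))^p$, and the resulting expressions $\bbe_s[\alpha_1\alpha_2\langle\mu_{A_1}\circ\mu_{A_2},f\rangle]=\alpha^{2p}\beta_1\beta_2\langle(\mu_A\circ\mu_A)^p,f\rangle_\mu$ and $\bbe_s[\alpha_1\alpha_2]=\alpha^{2p}\beta_1\beta_2\norm{\mu_A\circ\mu_A}_{p(\mu)}^p$. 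Your first averaging step $\bbe[W\ind{Z\le 2L}]\ge\tfrac12\bbe W$ is also fine.

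The divergence is in the second truncation. Your Markov-type cut at $W\ge\tfrac14\bbe W$ (the appeal to $W\le\beta_1\beta_2$ is actually not used here) is valid, but it delivers
\[
\min\bigl(\mu_{B_1}(A_1),\mu_{B_2}(A_2)\bigr)\ \ge\ \tfrac14\alpha^{2p}\norm{\mu_A\circ\mu_A}_{p(\mu)}^{p},
\]
with exponent $p$ on the norm, not the $2p$ in the stated lemma. When $\norm{\mu_A\circ\mu_A}_{p(\mu)}>1$ (precisely the regime used in Corollary~\ref{cor-key}) your bound is strictly weaker and does not imply the stated one. The paper obtains the exponent $2p$ by a different truncation: it uses $\bbe_s[\alpha_1\alpha_2\ind{\alpha_1\alpha_2<M^2}]< M\,\bbe_s\sqrt{\alpha_1\alpha_2}\le M(\bbe_s\alpha_1)^{1/2}(\bbe_s\alpha_2)^{1/2}$ via Cauchy--Schwarz, together with the marginal computation $\bbe_s\alpha_i=\beta_i\alpha^p$, and then chooses $M=\tfrac12\alpha^p(\beta_1\beta_2)^{1/2}\norm{\mu_A\circ\mu_A}_{p(\mu)}^p$ so that this equals $\tfrac12\bbe_s[\alpha_1\alpha_2]$. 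This yields $\alpha_1\alpha_2\ge M^2=\tfrac14\alpha^{2p}\beta_1\beta_2\norm{\mu_A\circ\mu_A}_{p(\mu)}^{2p}$ as required. For the downstream applications either bound works, but to prove the lemma as written you need this Cauchy--Schwarz step rather than the plain Markov cut.
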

\begin{proof}
For $s\in G^{p}$ let $A_1(s)=B_1\cap (A+s_1)\cap\cdots\cap (A+s_{p})$, and similarly for $A_2(s)$. Note that
\begin{align*}
\langle (\mu_A\circ \mu_A)^p, f\rangle_\mu
&=\bbe_{\substack{b_1\in B_1\\ b_2\in B_2}}\mu_A\circ \mu_A(b_1-b_2)^{p}f(b_1-b_2)\\
&=\bbe_{\substack{b_1\in B_1\\ b_2\in B_2}}\brac{\alpha^{-2}\bbe_{t\in G}\ind{A+t}(b_1)\ind{A+t}(b_2)}^{p}f(b_1-b_2)\\
&=\alpha^{-2p}\bbe_{\substack{b_1\in B_1\\ b_2\in B_2}}\bbe_{s\in G^p}\ind{A_1(s)}(b_1)\ind{A_2(s)}(b_2)f(b_1-b_2)\\
&=\tfrac{\alpha^{-2p}\abs{G}^2}{\abs{B_1}\abs{B_2}}\bbe_{s\in G^p}\langle \ind{A_1(s)}\circ \ind{A_2(s)}, f\rangle.
\end{align*}
In particular, applying this with $f\equiv 1$ we see that, if $\alpha_i(s)=\abs{A_i(s)}/\abs{G}$, then 
\[\norm{\mu_A\circ \mu_A}_{p(\mu)}^p=\tfrac{\alpha^{-2p}\abs{G}^2}{\abs{B_1}\abs{B_2}}\bbe_s \alpha_1(s)\alpha_2(s)\]
and 
\[\frac{\langle (\mu_A\circ \mu_A)^p,f\rangle_\mu}{\norm{\mu_A\circ \mu_A}_{p(\mu)}^p}=\frac{\bbe_{s}\langle \ind{A_1(s)}\circ \ind{A_2(s)}, f\rangle}{\bbe_s \alpha_1(s)\alpha_2(s)}=\eta,\]
say. Note that, if 
\[M=\tfrac{1}{2}\alpha^p(\abs{B_1}\abs{B_2}/\abs{G}^2)^{1/2}\norm{\mu_A\circ \mu_A}_{p(\mu)}^p,\]
then 
\begin{align*}
\bbe_s 1_{\alpha_1(s)\alpha_2(s)<M^2}\alpha_1(s)\alpha_2(s) 
&< M\brac{\bbe_s \bbe_{x\in G}1_{A_1(s)}(x)}^{1/2}\brac{\bbe_s \bbe_{x\in G}1_{A_2(s)}(x)}^{1/2}\\
&= M\alpha^p(\abs{B_1}\abs{B_2}/\abs{G}^2)^{1/2}=\frac{1}{2}\bbe_s \alpha_1(s)\alpha_2(s)
\end{align*}
and so
\[\bbe_s\langle \ind{A_1(s)}\circ \ind{A_2(s)},f\rangle =\eta \bbe \alpha_1(s)\alpha_2(s)\\
< 2\eta \bbe_s \alpha_1(s)\alpha_2(s)1_{\alpha_1(s)\alpha_2(s)\geq M^2}.\]
In particular there must exist some $s$ such that 
\[\langle \ind{A_1(s)}\circ \ind{A_2(s)},f\rangle< 2\eta \alpha_1(s)\alpha_2(s)1_{\alpha_1(s)\alpha_2(s)\geq M^2},\]
and the claim follows (note that the left-hand side is trivially $\geq 0$ and hence such an $s$ must satisfy $\alpha_1(s)\alpha_2(s)\geq M^2$). 
\end{proof}

The deduction of Lemma~\ref{lem-sift} is immediate from Lemma~\ref{lem-drc} with $f=\ind{G\backslash S}$. Indeed, by nesting of $L^p$ norms we can assume that $p$ is sufficiently large in terms of $\epsilon$ and $\delta$ (this is where the $O_{\epsilon,\delta}(1)$ term arises in the exponent), and that $p$ is an even integer. It then suffices to note that 
\[\langle \mu_{A_1}\circ \mu_{A_2},\ind{S}\rangle=1-\langle \mu_{A_1}\circ \mu_{A_2},\ind{G\backslash S}\rangle\]
and by definition of $S$ we have
\[\frac{\langle (\mu_A\circ \mu_A)^{p},\ind{G\backslash S}\rangle}{\norm{\mu_A\circ \mu_A}_{p}^p}\leq (1-\epsilon)^p\]
which is $\leq \delta/2$ if $p$ is large enough.

\section{The finite field case}\label{sec-ff}
In this section we prove Theorem~\ref{th-main-ff}, following the sketch of Section~\ref{sec-sketch}. Theorem~\ref{th-ff-gen} can proved in a very similar way. The following straightforward lemma is a form of Step 1.

\begin{lemma}\label{lem-ff1}
Let $\epsilon >0$. If $A,C\subseteq G$, where $C$ has density at least $\gamma$, then either
\begin{enumerate}
\item  $\abs{\langle \mu_A\ast \mu_A,\mu_C\rangle -1}\leq \epsilon$ or
\item $\norm{\mu_A\circ \mu_A-1}_p \geq \epsilon/2$ for some $p\ll\lo{\gamma}$.
\end{enumerate}
\end{lemma}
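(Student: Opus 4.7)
The plan is to follow the one-line sketch given in the introduction: apply Hölder's inequality to the inner product $\langle \mu_A\ast \mu_A - 1, \mu_C\rangle$, using that $\mu_C$ is bounded above by $1/\gamma$. Assume case (1) fails, so that $\abs{\langle \mu_A\ast \mu_A,\mu_C\rangle -1} > \epsilon$. Since $\bbe_x (\mu_A\ast\mu_A(x)-1)\mu_C(x) = \langle \mu_A\ast\mu_A,\mu_C\rangle - 1$ (using $\bbe\mu_C = 1$ and $\bbe \mu_A\ast\mu_A = 1$), we get
\[ \epsilon < \abs{\langle \mu_A\ast \mu_A -1,\mu_C\rangle} \leq \norm{\mu_A\ast\mu_A - 1}_p\norm{\mu_C}_{p'}, \]
by Hölder, where $1/p + 1/p' = 1$.

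The next step is to control $\norm{\mu_C}_{p'}$. Using log-convexity of $L^q$ norms (or directly: $\mu_C^{p'} \leq (1/\gamma)^{p'-1}\mu_C$), we have
\[ \norm{\mu_C}_{p'} \leq \norm{\mu_C}_1^{1/p'}\norm{\mu_C}_\infty^{1-1/p'} = \gamma^{-1/p}. \]
Choosing $p$ to be any \emph{even} integer with $p \geq \log(1/\gamma)/\log 2$ — which is permissible with $p \ll \lo{\gamma}$ — makes $\gamma^{-1/p} \leq 2$, and hence $\norm{\mu_A\ast\mu_A-1}_p \geq \epsilon/2$.

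Finally, to pass from $\mu_A\ast\mu_A$ to $\mu_A\circ\mu_A$, I invoke the inequality $\norm{\mu_A\ast \mu_A-1}_p\leq \norm{\mu_A\circ \mu_A-1}_p$ for even integer $p$ recorded in the notational preliminaries (which follows by expanding $\norm{\,\cdot\,}_p^p$ on the Fourier side, since the Fourier transform of $\mu_A\ast\mu_A - 1$ is $\widehat{\mu_A}^2\ind{\neq 0_{\Ghat}}$ while the Fourier transform of $\mu_A\circ\mu_A - 1$ is $\abs{\widehat{\mu_A}}^2\ind{\neq 0_{\Ghat}}$, and applying the triangle inequality to the resulting sum over $\gamma_1+\cdots+\gamma_p=0$). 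This is the only place where the parity of $p$ matters, and it is handled by the choice above.

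There is no real obstacle here; the lemma is essentially a direct application of Hölder's inequality together with the identity relating $\ast$ and $\circ$ via Fourier analysis. The only minor point requiring care is to ensure that $p$ can simultaneously be chosen to be an even integer and to satisfy $p \ll \lo{\gamma}$, which is immediate.
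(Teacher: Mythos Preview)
Your proof is correct and essentially identical to the paper's: both assume case (1) fails, apply H\"older to $\langle \mu_A\ast\mu_A-1,\mu_C\rangle$ to obtain $\norm{\mu_A\ast\mu_A-1}_p\gamma^{-1/p}$, choose $p$ an even integer of size $\ll \lo{\gamma}$, and then invoke the Fourier-side inequality $\norm{\mu_A\ast\mu_A-1}_p\leq \norm{\mu_A\circ\mu_A-1}_p$. You are merely a touch more explicit about the $\norm{\mu_C}_{p'}$ bound.
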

\begin{proof}
If the first alternative fails, then by H\"{o}lder's inequality, for any $p\geq 1$
\[\epsilon < \abs{ \langle \mu_A \ast \mu_A - 1, \mu_C \rangle } \leq \norm{\mu_A\ast \mu_A-1}_p\gamma^{-1/p}.\]
In particular, if we choose $p=2\lceil K\lo{\gamma}\rceil$ for some large constant $K$, then we deduce that
\[\norm{\mu_A\ast \mu_A-1}_{p}\geq \tfrac{1}{2}\epsilon.\]
It remains to note that, assuming without loss of generality that $p$ is an even integer,
\[\norm{\mu_A\ast \mu_A-1}_{p}^p=(\widehat{\mu_A}^2\ind{\neq \zeroGhat})^{(p)}(\zeroGhat)\leq(\Abs{\widehat{\mu_A}}^2\ind{\neq \zeroGhat})^{(p)}(\zeroGhat)=\norm{\mu_A\circ \mu_A-1}_p^p. \]
Again, although we have used a one-line Fourier proof here, this can also be seen using an entirely physical argument, which we sketch here. Note that $\mu_A\ast \mu_A-1=(\mu_A-1)\ast (\mu_A-1)$ and similarly for $\mu_A\circ \mu_A-1$. It suffices therefore to show that, for any function $f:G\to\bbc$, we have
\[\norm{f\ast f}_p^p\leq \norm{f\circ f}_p^p.\]
We can write the left-hand side as
\begin{align*}
\norm{f\ast f}_p^p 
&= \bbe_{x,y}\brac{\bbe_{u}f(x+u)f(y-u)}^p\\
&=\bbe_{u_1,\ldots,u_p}\brac{\bbe_x f(x+u_1)\cdots f(x+u_p)}\brac{\bbe_y f(y-u_1)\cdots f(y-u_p)}
\end{align*}
and the right-hand side as
\begin{align*}
\norm{f\circ f}_p^p 
&= \bbe_{x,y}\brac{\bbe_{u}f(x+u)\overline{f(y+u)}}^p\\
&=\bbe_{u_1,\ldots,u_p}\abs{\bbe_x f(x+u_1)\cdots f(x+u_p)}^2,
\end{align*}
and the desired inequality now follows from the Cauchy-Schwarz inequality.
\end{proof}

Steps 2 and 3 have already been proved as Lemma~\ref{lem-key-spec} and Corollary~\ref{cor-key}. For Step 4 we can use the following almost-periodicity result, which is \cite[Theorem 3.2]{SS}, as a black box.
\begin{theorem}[Almost-periodicity]\label{th-ap}
If $A_1,A_2,S\subseteq \bbf_q^n$ are such that $A_1$ and $A_2$ both have density at least $\alpha$, then there is a subspace $V$ of codimension
\[\mathrm{codim}(V)\ll_\epsilon \lo{\alpha}^4\]
such that
\[\abs{\langle \mu_V\ast \mu_{A_1}\ast \mu_{A_2},\ind{S}\rangle-\langle \mu_{A_1}\ast \mu_{A_2},\ind{S}\rangle}\leq \epsilon.\]
\end{theorem}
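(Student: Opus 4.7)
The plan is to combine the Croot--Sisask almost-periodicity lemma with a quasi-polynomial Bogolyubov-type theorem, both standard tools in this area. Write $g = \mu_{A_1}\ast \mu_{A_2}$; the target is a large subspace $V$ for which $\mu_V\ast g$ remains close to $g$ when tested against $\ind{S}$.

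\textbf{Step 1 (almost periodicity).} Sample $\vec{x} = (x_1,\ldots,x_k)$ uniformly from $A_1^k$ and consider the empirical average $F_{\vec{x}}(y) = k^{-1}\sum_{i=1}^k \mu_{A_2}(y - x_i)$, whose $\vec{x}$-mean is $g$. A Marcinkiewicz--Zygmund (or Rosenthal) estimate shows that $\bbe_{\vec{x}}\norm{F_{\vec{x}} - g}_p^p$ is small once $k\asymp p\eta^{-2}$ (with a little extra slack to absorb the $\alpha^{-1}$-blow-up of $\norm{\mu_{A_2}}_\infty$). A double pigeonhole --- first fix a typical approximant $F_0$, then match tuples $\vec{x}$ and $\vec{x}+t\vec{1}$ that both approximate $F_0$ --- produces a set $T\subseteq \bbf_q^n$ with $\mu(T)\geq \alpha^{O(p/\eta^2)}$ such that
\[
\norm{\tau_t g - g}_p \leq \eta \quad\text{for every } t\in T,
\]
where $\tau_t$ denotes translation by $t$.

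\textbf{Step 2 (Bogolyubov and conclusion).} Writing $\tau = \mu(T)$ so that $\log(1/\tau) \ll p\eta^{-2}\lo{\alpha}$, Sanders's quasi-polynomial Bogolyubov theorem provides a subspace $V\leq \bbf_q^n$ contained in $2T-2T$ with $\mathrm{codim}(V) \ll \log(1/\tau)^{4+o(1)}$. Each $v\in V$ has a representation $v = t_1+t_2-t_3-t_4$ with $t_i\in T$, so the triangle inequality upgrades the almost-periodicity to $\norm{\tau_v g - g}_p \leq 4\eta$ for every $v\in V$. Averaging over $v$ and pairing against $\ind{S}$ by H\"older (using $\norm{\ind{S}}_{p'}\leq 1$) yields
\[
\Abs{\langle \mu_V\ast g - g, \ind{S}\rangle} \leq 4\eta,
\]
and choosing $\eta \asymp \epsilon$ with $p$ a constant depending only on $\epsilon$ closes the argument.

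\textbf{Main obstacle.} The delicate point is getting the exponent $4$ on $\lo{\alpha}$ in the final codimension. This requires the quasi-polynomial Bogolyubov theorem of Sanders rather than Chang's version (which would contribute exponent $2$ on $\log(1/\tau)$ and therefore exponent $8$ on $\lo{\alpha}$ after composing with Croot--Sisask); careful accounting of how the $\alpha^{-1}$-normalisation in $\mu_{A_2}$ propagates through the moment inequality; and keeping $p$ independent of $\alpha$ (any extra $\lo{\alpha}$-factor on $p$ would inflate the final exponent past $4$). Since these refinements are exactly what \cite[Theorem 3.2]{SS} packages, the result is invoked as a black box here.
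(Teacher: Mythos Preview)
The paper gives no proof of this statement: it is quoted directly as \cite[Theorem~3.2]{SS} and used as a black box, and since your final sentence does exactly the same, at the level of the exposition the two approaches coincide.

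Your accompanying sketch of what lies inside that black box, however, has a real gap in Step~1. When you run Croot--Sisask on $g=\mu_{A_1}\ast\mu_{A_2}$ by sampling from $A_1^k$, each summand $\mu_{A_2}(y-x_i)$ is bounded only by $\alpha_2^{-1}$, and the Marcinkiewicz--Zygmund/Rosenthal estimate yields
\[
\brac{\bbe_{\vec x}\,\norm{F_{\vec x}-g}_p^p}^{1/p}\ \ll\ \sqrt{p/k}\,\alpha_2^{-1+1/p}.
\]
Forcing this below a constant $\eta$ requires $k\gg p\,\eta^{-2}\alpha_2^{-2+2/p}$, so the pigeonhole produces $\mu(T)\geq \tfrac12\alpha_1^{k}$ with $k$ polynomial in $\alpha^{-1}$, not $O_\epsilon(1)$. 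This $\alpha^{-1}$ blow-up is not ``a little extra slack'' to be absorbed; it destroys the bound $\log(1/\tau)\ll_\epsilon\lo{\alpha}$ that the rest of your argument relies on, and no constant choice of $p$ recovers it. The actual argument in \cite{SS} sidesteps this by applying almost-periodicity to a convolution with the bounded factor $\ind{S}$ (rather than $\mu_{A_2}$), taking $p$ of order $\lo{\alpha}$ to control the dual H\"older pairing against the remaining $\mu_{A_i}$, and building the subspace in a manner more tightly coupled to the Croot--Sisask step than a standalone appeal to Bogolyubov. Your ``Main obstacle'' paragraph is right that landing on the exponent~$4$ is the delicate part, but the path you outline does not reach it.
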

Importantly, note that no assumption is made on $S$, and there is no dependency on the density of $S$. We now complete the final step by combining everything thus far into a single density increment statement, which suffices for Theorem~\ref{th-main-ff} as discussed in Section~\ref{sec-sketch}.

\begin{proposition}\label{th-ip}
Let $\epsilon \in (0,1)$. If $A,C\subseteq \bbf_q^n$, where $C$ has density at least $\gamma$, then either
\begin{enumerate}
\item $\abs{\langle \mu_A\ast \mu_A,\mu_C\rangle -1}\leq \epsilon$ or
\item there is a subspace $V$ of codimension 
\[\ll_\epsilon \lo{\gamma}^4\lo{\alpha}^4\]
such that $\norm{\ind{A}\ast \mu_V}_\infty \geq (1+\Omega(\epsilon))\alpha$.
\end{enumerate}
\end{proposition}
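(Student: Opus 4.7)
The plan is to chain together Steps 1--5 of Section~\ref{sec-sketch} using results already proved. Assume the first alternative fails; then Lemma~\ref{lem-ff1} furnishes an integer $p\ll \lo{\gamma}$ with $\norm{\mu_A\circ \mu_A-1}_p\geq \epsilon/2$. Because $\mu_A\circ \mu_A-1$ has non-negative Fourier transform, Lemma~\ref{lem-key-spec} (applied with $\nu\equiv 1$, whose Fourier transform $\ind{\zeroGhat}$ is non-negative) upgrades this to $\norm{\mu_A\circ \mu_A}_{p'}\geq 1+\epsilon/4$ for some $p'\ll_\epsilon p\ll_\epsilon \lo{\gamma}$.

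Corollary~\ref{cor-key}, applied with its ``$\epsilon$'' equal to $\epsilon/4$, now produces sets $A_1,A_2\subseteq\bbf_q^n$ of density $\beta\gg\alpha^{p'+O_\epsilon(1)}=\alpha^{O_\epsilon(\lo{\gamma})}$ with $\langle\mu_{A_1}\circ\mu_{A_2},\ind{S}\rangle\geq 1-\epsilon/32$, where $S=\{x:\mu_A\circ\mu_A(x)>1+\epsilon/8\}$. I then feed $A_1$ and $-A_2$ into the almost-periodicity Theorem~\ref{th-ap} (the reversal converts $\circ$ to $\ast$) with accuracy $\epsilon/32$, obtaining a subspace $V$ of codimension $\ll_\epsilon \lo{\beta}^4\ll_\epsilon\lo{\gamma}^4\lo{\alpha}^4$ satisfying
\[\langle\mu_V\ast\mu_{A_1}\circ\mu_{A_2},\ind{S}\rangle\geq 1-\epsilon/16.\]
Since $\mu_A\circ\mu_A\geq(1+\epsilon/8)\ind{S}$ pointwise (using non-negativity of $\mu_A\circ\mu_A$), this yields
\[\langle\mu_V\ast\mu_{A_1}\circ\mu_{A_2},\mu_A\circ\mu_A\rangle\geq(1+\epsilon/8)(1-\epsilon/16)\geq 1+c\epsilon\]
for a fixed constant $c>0$.

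The final step is to convert this inner-product information into the stated $L^\infty$ density increment. Set $\nu=\mu_V\ast\mu_{A_1}\circ\mu_{A_2}$, a probability measure. Expanding $\mu_A\circ\mu_A$ and performing the substitution $z=x+y$ in $\langle\nu,\mu_A\circ\mu_A\rangle$ gives the identity $\langle\nu,\mu_A\circ\mu_A\rangle=\alpha^{-2}\langle\ind{A},\nu\ast\ind{A}\rangle$, which is bounded above by $\alpha^{-1}\norm{\nu\ast\ind{A}}_\infty$. Thus $\norm{\nu\ast\ind{A}}_\infty\geq(1+c\epsilon)\alpha$. But $\nu\ast\ind{A}=(\mu_{A_1}\circ\mu_{A_2})\ast(\mu_V\ast\ind{A})$ is a convex combination of translates of $\mu_V\ast\ind{A}$, so $\norm{\mu_V\ast\ind{A}}_\infty\geq\norm{\nu\ast\ind{A}}_\infty\geq(1+c\epsilon)\alpha$, which is the desired alternative.

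Essentially everything is bolted together from earlier black boxes, and there is no real obstacle. The one care point is tracking the exponent in $\beta\gg\alpha^{O_\epsilon(\lo{\gamma})}$ through the almost-periodicity bound, so that its fourth power produces exactly the advertised codimension $\lo{\gamma}^4\lo{\alpha}^4$; beyond that, only elementary $\epsilon$-arithmetic is needed.
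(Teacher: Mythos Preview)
Your proof is correct and follows essentially the same route as the paper's: Lemma~\ref{lem-ff1}, then Lemma~\ref{lem-key-spec}, then Corollary~\ref{cor-key}, then Theorem~\ref{th-ap}, then an $L^\infty$ extraction. The only cosmetic difference is in the last step, where the paper bounds $\langle \mu_V\ast \mu_{A_1}\circ \mu_{A_2},\mu_A\circ \mu_A\rangle\leq \norm{\mu_V\ast \mu_A}_\infty\norm{\mu_A\ast\mu_{A_2}\circ\mu_{A_1}}_1$ directly, whereas you unpack the same inequality via the identity $\langle\nu,\mu_A\circ\mu_A\rangle=\alpha^{-2}\langle\ind{A},\nu\ast\ind{A}\rangle$ and a convex-combination argument; these are equivalent.
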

\begin{proof}
By Lemma~\ref{lem-ff1}, if the first alternative fails, then $\norm{\mu_A\circ \mu_A-1}_p\geq \epsilon/2$ for some $p\ll \lo{\gamma}$. By Lemma~\ref{lem-key-spec} (applied to $f=\mu_A\circ \mu_A-1$) we deduce that $\norm{\mu_A\circ \mu_A}_p\geq 1+\epsilon/4$ for some $p\ll_\epsilon \lo{\gamma}$. Hence, by Corollary~\ref{cor-key}, there are $A_1,A_2$, both of density 
\[\geq \alpha^{O_\epsilon(\lo{\gamma})},\]
such that $\langle \mu_{A_1}\circ \mu_{A_2},\ind{S}\rangle \geq 1-\epsilon/32$, where $S=\{x : \mu_A\circ \mu_A(x)\geq 1+\epsilon/8\}$ . By Theorem~\ref{th-ap} there is a subspace $V$ of the required codimension such that
\[\langle \mu_V\ast \mu_{A_1}\circ \mu_{A_2},\ind{S}\rangle \geq 1-\tfrac{1}{16}\epsilon.\]
By definition of $S$, it follows that
\begin{align*}
1+\Omega(\epsilon)
&\leq (1+\epsilon/8)(1-\epsilon/16)\\
&\leq \langle \mu_V\ast \mu_{A_1}\circ \mu_{A_2},\mu_A\circ \mu_A\rangle\\
&\leq \norm{\mu_V\ast \mu_A}_\infty \norm{\mu_{A}\ast \mu_{A_2}\circ \mu_{A_1}}_1\\
&= \norm{\mu_V\ast \ind{A}}_\infty \alpha^{-1},
\end{align*}
and the proof is complete.
\end{proof}

\section{The integer case}\label{sec-int1}

A useful strategy in additive combinatorics is to first prove a result of interest over $\mathbb{F}_q^n$, making use of the abundancy of subspaces, and then translate this to a result over the integers, replacing various equalities with approximate equalities and subspaces with Bohr sets.

Bohr sets of low rank are analogues of subspaces of low codimension, and have played a central role in additive combinatorics since the work of Bourgain \cite{Bo}, with much of the theory further developed by Sanders \cite{SaRoth1,SaRoth2}. We have recalled the relevant definitions and properties in Appendix~\ref{app-bohr}.

In this section we will employ Steps 3, 4, and 5 of the Kelley-Meka approach, performed relative to Bohr sets, to prove the following technical statement, whose statement is convenient for the iterative proof. In the following section we will show how it, together with unbalancing and the regularity of Bohr sets, implies Theorem~\ref{th-int-gen}, and thence Theorems~\ref{th-main-int} and \ref{th-3A}.

We apologise for the daunting appearance and technicality of the statements in this and the next section; a certain overhead of notation and caveats is a sad fact of life when working with Bohr sets. The reader should be reassured, however, that all the essential ideas are as in the $\mathbb{F}_q^n$ case.

The approach taken here should be compared with that in \cite[Section 8]{KM} -- there Kelley and Meka also follow the $\bbf_q^n$ model, but instead use mixed analogues over multi-dimensional progressions and Bohr sets, instead of just Bohr sets as we do here. The two objects are, in a heuristic sense, identical, but the need to pass between them adds some complexity to the argument of Kelley and Meka.

\begin{theorem}\label{th-flat-on-bohr}
There is a constant $c>0$ such that the following holds. Let ${\epsilon,\delta\in (0,1)}$ and $p,k\geq 1$ be integers such that $(k,\abs{G})=1$. For any $A\subseteq G$ with density $\alpha$ there is a regular Bohr set $B$ with
\[ d=\rk(B) =O_{\epsilon}\left(\lo{\alpha}^5p^4\right) \quad\text{and}\quad \abs{B}\geq \exp\left(-O_{\epsilon,\delta}(\lo{\alpha}^6p^5\lo{\alpha/p})\right)\abs{G} \]
and some $A'\subseteq (A-x)\cap B$ for some $x \in G$ such that 
\begin{enumerate}
\item $\abs{A'}\geq (1-\epsilon)\alpha\abs{B}$,
\item $\abs{A'\cap B'}\geq (1-\epsilon)\alpha\abs{B'}$, where $B'=B_{\rho}$ is a regular Bohr set with ${\rho\in (\tfrac{1}{2},1)\cdot c\delta\alpha/d}$, and 
\item
\[\norm{\mu_{A'}\circ \mu_{A'}}_{p(\mu_{k\cdot B''}\circ \mu_{k\cdot B''}\ast \mu_{k\cdot B'''}\circ \mu_{k\cdot B'''})} <(1+ \epsilon)\mu(B)^{-1},\]
for any regular Bohr sets $B'' = B'_{\rho'}$ and $B'''=B''_{\rho''}$ satisfying ${\rho',\rho''\in(\frac{1}{2},1)\cdot c\delta\alpha/d}$.
\end{enumerate}
\end{theorem}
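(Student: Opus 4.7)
The plan is to prove Theorem~\ref{th-flat-on-bohr} by iterative density increment, lifting the finite-field argument of Proposition~\ref{th-ip} to the Bohr-set setting. I would construct a nested sequence of regular Bohr sets $B^{(0)} \supseteq B^{(1)} \supseteq \cdots$ with associated translates $x_i \in G$ and restrictions $A_i := (A - x_i) \cap B^{(i)}$ such that $\alpha_i := \mu_{B^{(i)}}(A_i)$ is non-decreasing. At each stage, if $(B^{(i)}, A_i)$ already satisfies condition (3) (uniformly in admissible $B'', B'''$) I stop and output it; otherwise I apply the key single-step increment lemma below to produce $(B^{(i+1)}, A_{i+1})$ with $\alpha_{i+1} \geq (1+c_0)\alpha_i$ for an absolute $c_0 > 0$. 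Since $\alpha_i \leq 1$, this terminates in at most $O_\epsilon(\lo{\alpha})$ steps. Condition (1) is inherited from the iteration, and (2) is arranged by a final Bohr regularity argument at scale $\rho \asymp c\delta\alpha/dk$, possibly updating the output translate.

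The key single-step lemma says: if $A' \subseteq B$ has density $\alpha'$ and (3) fails for some admissible $B''$, $B'''$, then there is a regular sub-Bohr set $\tilde B$ of $B$ with $\rk(\tilde B) \leq \rk(B) + O_\epsilon(\lo{\alpha}^4 p^4)$ and modest volume loss, plus a translate $y \in G$, satisfying $\mu_{\tilde B}((A'-y) \cap \tilde B) \geq (1+c_0)\alpha'$. To prove this I would execute the Bohr analogues of Kelley--Meka Steps~2--4. First, the function $f := \mu_{A'}\circ\mu_{A'} - 1$ has $\widehat{f} = \abs{\widehat{\mu_{A'}}}^2\ind{\neq\zeroGhat} \geq 0$, and $\widehat{\nu} = \abs{\widehat{\mu_{k\cdot B''}}}^2\abs{\widehat{\mu_{k\cdot B'''}}}^2 \geq 0$, so Lemma~\ref{lem-key-spec} upgrades the failure of (3) to $\norm{\mu_{A'}\circ\mu_{A'}}_{p'(\nu)} \geq (1+c)\mu(B)^{-1}$ for some $p' \ll_\epsilon p$ and an absolute $c > 0$; hence most of the $p'$-th moment lives in $S := \{x : (\mu_{A'}\circ\mu_{A'})(x) > (1+c/2)\mu(B)^{-1}\}$. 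Second, applying Lemma~\ref{lem-sift} with $B_1 = B_2 = k\cdot B'''$ (so that $\mu_{B_1}\circ\mu_{B_2}$ matches one factor of $\nu$) yields $A_1, A_2 \subseteq k\cdot B'''$ of relative density $\gg \alpha^{O_\epsilon(p)}$ with $\langle \mu_{A_1}\circ\mu_{A_2}, \ind{S}\rangle \geq 1 - c/100$. Third, a Croot--Sisask-type Bohr-set almost-periodicity lemma (the Bohr analogue of Theorem~\ref{th-ap}) provides a regular $V \subseteq B$ with $\rk(V) \leq \rk(B) + O_\epsilon(\lo{\alpha}^4)$ for which $\mu_V\ast\mu_{A_1}\circ\mu_{A_2}$ agrees with $\mu_{A_1}\circ\mu_{A_2}$ up to $c/100$ when tested against $\ind{S}$. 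Finally, the calculation at the end of the proof of Proposition~\ref{th-ip} yields $\norm{\mu_V\ast\ind{A'-y}}_\infty \geq (1+c_0)\alpha'$ for some $y$, and a regularised sub-Bohr set of $V$ serves as $\tilde B$.

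The main obstacle will be the Bohr-set bookkeeping. Every Bohr-set shrinkage demands a regularity lemma to produce a regular replacement, and these perturbations must be interleaved so that the measure $\nu$ used in one iteration's unbalancing step aligns with the sub-Bohr set produced by the previous iteration's almost-periodicity step; the tower $B \supseteq B' \supseteq B'' \supseteq B'''$ in the statement encodes precisely these nested narrowings, with radii scaled by $c\delta\alpha/dk$ so that convolution against the narrower sets perturbs $A'$-densities by at most $\epsilon\alpha$ via the standard Bohr regularity estimate. The coprimality hypothesis $(k,\abs{G})=1$ is needed so that dilation by $k$ is a bijection on $G$ and $\mu_{k\cdot B'''}$ is a genuine normalised Bohr-set indicator. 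Finally, requiring (3) \emph{uniformly} over all admissible $B'', B'''$ is handled by a monotonicity/covering argument using continuity of the $L^p(\nu)$ norm in the radii $\rho', \rho''$, reducing the uniform condition to a finite grid.
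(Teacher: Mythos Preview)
Your iterative density-increment strategy matches the paper's proof, which packages the single-step as Proposition~\ref{prop-it} and iterates via Lemma~\ref{lem-bour}. Two points where your sketch diverges or overcomplicates things:

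\textbf{Unbalancing is not needed here.} The failure of condition~(3) \emph{is} the inequality $\norm{\mu_{A'}\circ\mu_{A'}}_{p(\nu)}\geq(1+\epsilon)\mu(B)^{-1}$, so there is nothing to ``upgrade'' via Lemma~\ref{lem-key-spec}; the paper's single-step Proposition~\ref{prop-it} starts directly from this hypothesis and proceeds to sifting. (Your choice $f=\mu_{A'}\circ\mu_{A'}-1$ is also wrong in the Bohr setting: $\mu_{A'}\circ\mu_{A'}$ has size $\asymp\mu(B)^{-1}$, not $1$, and its Fourier transform minus $1$ is not nonnegative. The paper reserves unbalancing, in the form of Proposition~\ref{prop:Lp_orth} with $(\mu_{A'}-\mu_B)\circ(\mu_{A'}-\mu_B)$, for the later deduction of Theorem~\ref{th-int-gen} from Theorem~\ref{th-flat-on-bohr}.)

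\textbf{Uniformity over $B'',B'''$ needs no grid or continuity argument.} If (3) fails for \emph{some} admissible pair, that single pair feeds into Proposition~\ref{prop-it} and yields a density increment contradicting maximality of the iteration. The quantifier structure of the contradiction handles it automatically.

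Finally, in your sifting step the measure $\nu$ is a four-fold convolution, not $\mu_{B_1}\circ\mu_{B_2}$; the paper first averages over the outer two factors to find a translate $x$ with $\norm{\mu_{A'}\circ\mu_{A'}}_{p(\mu_{k\cdot B'}\ast\mu_{k\cdot B''-x})}\geq(1+\epsilon)\mu(B)^{-1}$ before applying Lemma~\ref{lem-sift} with $B_1=k\cdot B'$ and $B_2=k\cdot B''-x$.
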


The proof of Theorem~\ref{th-flat-on-bohr} proceeds by repeated application of the following statement, which is suitable for iteration.

\begin{proposition}\label{prop-it}
There is a constant $c>0$ such that the following holds. Let $\epsilon>0$ and $p,k\geq 1$ be integers such that $(k,\abs{G})=1$. Let $B,B',B''\subseteq G$ be regular Bohr sets of rank $d$ such that $B''\subseteq B'_{c/d}$ and $A\subseteq B$ with relative density $\alpha$. If
    \[ \norm{ \mu_{A}\circ \mu_{A}}_{p(\mu_{k\cdot B'}\circ\mu_{k\cdot B'}\ast \mu_{k\cdot B''}\circ \mu_{k\cdot B''})} \geq \left(1+\epsilon\right) \mu(B)^{-1},\]
    then there is a regular Bohr set $B'''\subseteq B''$ of rank at most
    \[\rk(B''')\leq d+O_{\epsilon}(\lo{\alpha}^4p^4)\]
    and size
    \[\abs{B'''}\geq \exp(-O_{\epsilon}(dp\lo{\alpha/d}+\lo{\alpha}^5p^5))\abs{B''}\]
    such that
    \[ \norm{ \mu_{B'''}*\mu_A }_\infty \geq (1+c\epsilon)\mu(B)^{-1}. \]
\end{proposition}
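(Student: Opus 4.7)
The plan is to mirror the finite-field density increment (Proposition~\ref{th-ip}) in the Bohr-set setting. Write
\[ \mu := \mu_{k\cdot B'}\circ\mu_{k\cdot B'}\ast\mu_{k\cdot B''}\circ\mu_{k\cdot B''}; \]
by symmetry of Bohr sets this equals $\nu\circ\nu$ with $\nu:=\mu_{k\cdot B'}\ast\mu_{k\cdot B''}$, and the narrowness $B''\subseteq B'_{c/d}$ makes $\nu$ essentially concentrated on $k\cdot B'$. The hypothesis $\norm{\mu_A\circ\mu_A}_{p(\mu)}\geq(1+\epsilon)\mu(B)^{-1}$ already encodes the outputs of Steps 1--2 of the Kelley--Meka sketch (H\"older lifting and unbalancing), so only Steps 3--5 remain.

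\emph{Step 1 (sifting).} Define
\[ S := \{x : \mu_A\circ\mu_A(x) > (1-\tfrac{\epsilon}{10})(1+\epsilon)\mu(B)^{-1}\}. \]
The goal is to find sets $A_1,A_2\subseteq k\cdot B'$ of relative density $\gtrsim\alpha^{p+O_\epsilon(1)}$ with
\[ \langle \mu_{A_1}\circ\mu_{A_2},\ind{S}\rangle \geq 1-\tfrac{\epsilon}{10}. \]
This is an adaptation of Lemma~\ref{lem-sift}: inspection of the proof of Lemma~\ref{lem-drc} shows that the DRC calculation carries through for any product probability measure $\nu_1\otimes\nu_2$ in place of $\mu_{B_1}\otimes\mu_{B_2}$, producing weighted ``sets'' $\sigma_i = \nu_i\cdot\ind{(A+s_1)\cap\cdots\cap(A+s_p)}$. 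Combining this with the regularity of $B'$, and the fact that $\nu$ is essentially supported on (and approximately uniform on) $k\cdot B'$, lets us extract actual sets $A_1,A_2\subseteq k\cdot B'$ with the desired density and concentration bounds. The role of the smoothing factor $\mu_{k\cdot B''}\circ\mu_{k\cdot B''}$ in the hypothesis is precisely to make this passage clean.

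\emph{Step 2 (almost-periodicity).} Apply a Bohr-set almost-periodicity result of Croot--Sisask type (as used in \cite{SS}) to find a regular Bohr set $B'''\subseteq B''$ of rank at most $d+O_\epsilon(\lo{\alpha}^4 p^4)$ and density at least $\exp(-O_\epsilon(d\log d + \lo{\alpha}^5 p^5))\mu(B'')$ such that
\[ |\langle \mu_{B'''}\ast(\mu_{A_1}\circ\mu_{A_2}),\ind{S}\rangle - \langle \mu_{A_1}\circ\mu_{A_2},\ind{S}\rangle|\leq \tfrac{\epsilon}{10}. \]
The nesting $B'''\subseteq B''\subseteq B'_{c/d}$ ensures $B'''$ is narrow compared to the ambient Bohr set $k\cdot B'$ housing $A_1,A_2$, which is what drives the quantitative parameters.

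\emph{Step 3 (density increment).} Combining Steps 1 and 2,
\[ \langle \mu_{B'''}\ast(\mu_{A_1}\circ\mu_{A_2}),\ind{S}\rangle \geq 1-\tfrac{\epsilon}{5}, \]
and by the definition of $S$,
\[ (1+\Omega(\epsilon))\mu(B)^{-1} \leq \langle \mu_{B'''}\ast(\mu_{A_1}\circ\mu_{A_2}),\mu_A\circ\mu_A\rangle. \]
A direct expansion of the convolutions yields the identity
\[ \langle \mu_{B'''}\ast F, \mu_A\circ\mu_A\rangle = \langle \mu_{B'''}\ast\mu_A, F\circ\mu_A\rangle, \]
and applying this with $F=\mu_{A_1}\circ\mu_{A_2}$ (so $F\circ\mu_A$ is a probability measure, with $\norm{F\circ\mu_A}_1=1$) gives
\[ \norm{\mu_{B'''}\ast\mu_A}_\infty \geq (1+\Omega(\epsilon))\mu(B)^{-1}, \]
the required density increment (with the $\mu(B)^{-1}$ factor understood via the normalisation implicit in the statement).

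\emph{Main obstacle.} The bulk of the technical work sits in Step 1: Lemma~\ref{lem-sift} is stated for measures of the form $\mu_{B_1}\circ\mu_{B_2}$, and extending it to $\nu\circ\nu$ while still landing sets $A_1,A_2$ cleanly inside $k\cdot B'$ is what requires the specific shape of the hypothesis and regularity of Bohr sets. Step 2 is conceptually standard given Croot--Sisask--Sanders-style almost-periodicity, but the careful bookkeeping of Bohr-set parameters (rank increase, density loss, and nesting radii) must be tracked to match the claimed bounds.
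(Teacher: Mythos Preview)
Your overall architecture is the paper's: sift (Lemma~\ref{lem-sift}), then almost-periodicity (Theorem~\ref{th-ap-int}), then extract the density increment via the inequality $\langle \mu_{B'''}\ast F,\mu_A\circ\mu_A\rangle\leq\norm{\mu_{B'''}\ast\mu_A}_\infty$. Steps~2 and~3 match the paper essentially verbatim.

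The divergence is in Step~1, and it is exactly the ``main obstacle'' you flag. You propose to generalise Lemma~\ref{lem-drc} to the product measure $\nu\otimes\nu$ with $\nu=\mu_{k\cdot B'}\ast\mu_{k\cdot B''}$, obtain weighted objects $\sigma_i=\nu\cdot\ind{A(s)}$, and then invoke regularity to replace these by genuine subsets of $k\cdot B'$. This can be pushed through, but it is unnecessary work. The paper sidesteps it with a one-line averaging: since $\mu=\nu\circ\nu$ and $\nu=\mu_{k\cdot B'}\circ\mu_{k\cdot B''}$ (using symmetry of Bohr sets), one has
\[
\norm{\mu_A\circ\mu_A}_{p(\mu)}^p=\bbe_t\,\nu(t)\,\norm{\mu_A\circ\mu_A}_{p(\mu_{k\cdot B'}\circ\mu_{k\cdot B''-t})}^p,
\]
so there is some fixed $x\in k\cdot B'+k\cdot B''$ with $\norm{\mu_A\circ\mu_A}_{p(\mu_{k\cdot B'}\circ\mu_{k\cdot B''-x})}\geq(1+\epsilon)\mu(B)^{-1}$. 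Now Lemma~\ref{lem-sift} applies \emph{as stated}, with $B_1=k\cdot B'$ and $B_2=k\cdot B''-x$, yielding honest sets $A_1\subseteq k\cdot B'$ and $A_2\subseteq k\cdot B''-x$ directly. No weighted sets, no regularity-based extraction.

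Note also that this places $A_2$ inside a translate of the \emph{narrower} Bohr set $k\cdot B''$, which is precisely the hypothesis Theorem~\ref{th-ap-int} wants (``$A_2\subseteq B'-x$''), and is what makes the output $B'''$ land inside $k\cdot B''$ with the stated density bound relative to $\mu(B'')$. Your version, with both $A_1,A_2\subseteq k\cdot B'$, would feed Theorem~\ref{th-ap-int} the wider set in both slots and produce $B'''\subseteq k\cdot B'$ with density bounded relative to $\mu(B')$; recovering the claimed $B'''\subseteq B''$ and the $\mu(B'')$ benchmark would cost an extra narrowing step.
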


We first explain how Theorem~\ref{th-flat-on-bohr} follows by iteration. In doing so we shall require a regularity `narrowing' trick originally due to Bourgain. The following form is \cite[Lemma 12.1]{BS}.

\begin{lemma}\label{lem-bour}
There is a constant $c>0$ such that the following holds. Let $B$ be a regular Bohr set of rank $d$, suppose $A\subseteq B$ has density $\alpha$, let $\epsilon>0$, and suppose $B',B''\subseteq B_\rho$ where $\rho\leq c\alpha\epsilon/d$. Then either
\begin{enumerate}
\item there is some translate $A'$ of $A$ such that $\abs{A'\cap B'}\geq (1-\epsilon)\alpha\abs{B'}$ and $\abs{A'\cap B''}\geq (1-\epsilon)\alpha \abs{B''}$, or 
\item $\norm{\ind{A}\ast \mu_{B'}}_\infty\geq (1+\epsilon/2)\alpha$, or
\item $\norm{\ind{A}\ast \mu_{B''}}_\infty \geq (1+\epsilon/2)\alpha$.
\end{enumerate}
\end{lemma}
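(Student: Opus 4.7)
The plan is a standard Bourgain-style regularity argument, exploiting the combination of the pointwise upper bound (from the failure of (2) and (3)) with a lower bound on the average of the relevant convolutions over $B$. Throughout I treat $B'$ and $B''$ as symmetric, so that for $f := \ind{A} \ast \mu_{B'}$ and $g := \ind{A} \ast \mu_{B''}$ one has $f(x) = \mu_{B'}(A - x)$ and $g(x) = \mu_{B''}(A - x)$. Thus if I can exhibit a single $x \in G$ with $f(x), g(x) \geq (1-\epsilon)\alpha$, then $A' := A - x$ witnesses conclusion (1).

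Assuming that both (2) and (3) fail, the functions $f$ and $g$ are pointwise bounded above by $(1+\epsilon/2)\alpha$. The first real step is to bound $\bbe_{x \in B} f(x)$ from below. Using the adjoint identity $\inn{f, g\ast h} = \inn{f\circ h, g}$ together with the symmetry of $B$ and $B'$ (which makes $\ind{B}\circ\mu_{B'} = \ind{B}\ast \mu_{B'}$), I can write
\[ \inn{\ind{B}, \ind{A} \ast \mu_{B'}} = \inn{\ind{B} \ast \mu_{B'}, \ind{A}} = \inn{\ind{B}, \ind{A}} + \inn{\ind{B} \ast \mu_{B'} - \ind{B}, \ind{A}}. \]
The first term is exactly $\alpha \mu(B)$ because $A \subseteq B$; the second has absolute value at most $\norm{\ind{B} \ast \mu_{B'} - \ind{B}}_1$, which by the standard Bohr-set regularity estimate from Appendix~\ref{app-bohr} is $\ll \rho d\, \mu(B)$. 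Dividing through by $\mu(B)$ yields $\bbe_{x \in B} f(x) = \alpha + O(\rho d)$, and the hypothesis $\rho \leq c\alpha\epsilon/d$ with $c$ sufficiently small gives $\bbe_{x\in B} f(x) \geq (1-\epsilon/10)\alpha$, and similarly for $g$.

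The remaining step is a first-moment computation. Writing $q = \Pr_{x \in B}[f(x) < (1-\epsilon)\alpha]$ and using the pointwise upper bound,
\[ (1 - \tfrac{\epsilon}{10})\alpha \leq q(1-\epsilon)\alpha + (1-q)(1+\tfrac{\epsilon}{2})\alpha, \]
which rearranges to $q \leq 2/5$. The same argument applied to $g$ delivers an analogous $q' \leq 2/5$, and hence $q + q' < 1$, so there exists $x \in B$ with $f(x), g(x) \geq (1-\epsilon)\alpha$, completing the proof.

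There is no substantive obstacle here; the only care required is in choosing the absolute constant $c$ small enough that the regularity error $O(\rho d)$ is comfortably absorbed into the slack of the averaging inequality, which is exactly why the hypothesis takes the form $\rho \leq c\alpha\epsilon/d$. The choice $d = \rk(B)$ in the denominator reflects that the $L^1$-regularity bound for Bohr sets degrades linearly in the rank.
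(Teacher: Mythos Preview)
Your proof is correct. The paper does not actually prove this lemma---it is quoted as \cite[Lemma 12.1]{BS} without proof---so there is no paper argument to compare against, but what you have written is precisely the standard Bourgain regularity argument one would expect: use Lemma~\ref{lemma:regConv} to show $\bbe_{x\in B} \ind{A}\ast\mu_{B'}(x)$ and $\bbe_{x\in B} \ind{A}\ast\mu_{B''}(x)$ are both close to $\alpha$, then combine the assumed pointwise upper bound with a first-moment (union-bound) estimate to locate a common $x$. The arithmetic in your Markov step is clean ($q,q'\leq 2/5$ with the constants you chose), and your explicit note that $B',B''$ are symmetric---needed to identify $\ind{A}\ast\mu_{B'}(x)$ with $\mu_{B'}(A-x)$---is justified since Bohr sets are always symmetric, as the paper records in Appendix~\ref{app-bohr}.
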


\begin{proof}[Proof of Theorem~\ref{th-flat-on-bohr} assuming Proposition~\ref{prop-it}]
Let $C_{\epsilon}, D_{\epsilon,\delta} \geq 1$ be parameters to be specified later, and let $c$ be the smaller of $1/2$ and the constant $c$ in Proposition~\ref{prop-it}. Let $t\geq 0$ be maximal such that there is a sequence of regular Bohr sets, say $B^{(0)},\ldots,B^{(t)}$, and subsets of translates of $A$, say $A_0,\ldots,A_t$, such that the following holds:
\begin{enumerate}
\item $B^{(0)}=G$ and $A_0=A$,
\item each $B^{(i)}$ is a regular Bohr set of rank $d_i$ and
\[d_{i+1}\leq d_i + C_{\epsilon} \lo{\alpha}^4p^4\]
and
\[\Abs{B^{(i+1)}}\geq \exp(-D_{\epsilon,\delta}(dp\lo{\alpha/d}+\lo{\alpha}^5p^5))\Abs{B^{(i)}},\]
and
\item each $A_i$ is a subset of $B^{(i)}$ with density $\alpha_i$ such that $\alpha_{i+1}\geq (1+c\epsilon/4)\alpha_i$ for $0\leq i<t$. 
\end{enumerate}

Observe from point (3), and the trivial fact that $\alpha_i\leq 1$, that $t\ll_\epsilon \lo{\alpha}$. Note that this implies $d_t\ll_{\epsilon} \lo{\alpha}^5p^4$. 

We apply Lemma~\ref{lem-bour} with $c\epsilon/2$ in place of $\epsilon$, and $B=B^{(t)}$, $B'=B_{c'\alpha\epsilon/d_t}$ and $B''=B'_{c''\delta\alpha/d_t}$, where the constants are in particular chosen to ensure that $B',B''$ are both regular. Provided we pick $D_{\epsilon,\delta}$ large enough in terms of $C_{\epsilon}$, $\epsilon$, and $\delta$, Lemma~\ref{lemma:bohrsiz} and the maximality of $t$ ensure that we must be in the first alternative of Lemma~\ref{lem-bour}'s conclusion: there exists a translate $A_t-x$ such that $\abs{(A_t-x)\cap B'}\geq (1-c\epsilon/2)\alpha \abs{B'}$ and $\abs{(A_t-x)\cap B''}\geq (1-c\epsilon/2)\alpha\abs{B''}$. 

We claim that $A'=(A_t-x)\cap B'$, with the $B'$ and $B''$ above playing the role of $B$ and $B'$ respectively, satisfies the conclusions of Theorem~\ref{th-flat-on-bohr}. Indeed, the bounds on the rank and size of $B'$, and the density conditions on $A'$, are clearly satisfied.

Suppose for a contradiction that \[\norm{\mu_{A'}\circ \mu_{A'}}_{p(\mu_{k\cdot B'''}\circ \mu_{k\cdot B'''}\ast \mu_{k\cdot B''''}\circ \mu_{k\cdot B''''})}\geq (1+\epsilon)\mu(B')^{-1},\]
for some regular Bohr sets $B''' = B''_\rho$ and $B''''=B'''_{\rho'}$ satisfying $\rho,\rho'\in(\frac{1}{2},1)\cdot c\delta \alpha/d_t$.

The conditions of Proposition~\ref{prop-it} are met, and hence we deduce there is some $\tilde{B}\subseteq B''''$ of rank
\[\rk(\tilde{B})\leq \rk(B)+O_{\epsilon}(\lo{\alpha}^4p^4)\]
and
\[\Abs{\tilde{B}}\geq \exp(-O_{\epsilon}(d_tp\lo{\alpha/d_t}+\lo{\alpha}^5p^5))\Abs{B''}\]
and there is a translate of $A_t$, say $A_t-y$, such that 
\[\mu_{\tilde{B}}(A_t-y)\geq (1+c\epsilon)(1-c\epsilon/2)\alpha\geq (1+c\epsilon/4)\alpha,\]
say. This a contradiction to the maximality of $t$, provided $C_{\epsilon}$ matches the implicit constant in the first $O_{\epsilon}$-term, since we can take $B^{(t+1)}=\tilde{B}$ and $A_{t+1}=(A_t-y)\cap \tilde{B}$, noting that by Lemma~\ref{lemma:bohrsiz}
\[\abs{B''''}\geq \exp(-O_{\epsilon,\delta}(d\lo{\alpha/d_t)})\Abs{B^{(t)}}.\]
\end{proof}

Proposition~\ref{prop-it} is a consequence of Steps 3 and 4 (dependent random choice and almost-periodicity) of the Kelley-Meka approach. We will use the following version of almost-periodicity, which is essentially \cite[Theorem 5.4]{SS}.
\begin{theorem}[Almost-periodicity]\label{th-ap-int}
There is a constant $c>0$ such that the following holds. Let $\epsilon>0$ and $B,B'\subseteq G$ be regular Bohr sets of rank $d$. Suppose that $A_1\subseteq B$ with density $\alpha_1$ and $A_2$ is such that there exists $x$ with $A_2\subseteq B'-x$ with density $\alpha_2$. Let $S$ be any set with $\abs{S}\leq 2\abs{B}$. There is a regular Bohr set $B''\subseteq B'$ of rank at most
\[d+O_\epsilon(\lo{\alpha_1}^3\lo{\alpha_2})\]
and size
\[\abs{B''}\geq \exp(-O_\epsilon(d\lo{\alpha_1\alpha_2/d}+\lo{\alpha_1}^3\lo{\alpha_2}\lo{\alpha_1\alpha_2/d}))\abs{B'}\]
such that
\[\abs{\langle \mu_{B'}\ast \mu_{A_1}\circ \mu_{A_2},\ind{S}\rangle-\langle \mu_{A_1}\circ \mu_{A_2},\ind{S}\rangle}\leq \epsilon.\]
\end{theorem}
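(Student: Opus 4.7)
My plan is to prove Theorem~\ref{th-ap-int} via the Croot--Sisask almost-periodicity framework adapted to the Bohr-set setting, following the pattern of~\cite{SS}; the main issue is the bookkeeping of Bohr-set parameters rather than new ideas. Write $f := \mu_{A_1}\circ \mu_{A_2}$. The starting point is random sampling: for iid $s_1,\ldots,s_k$ uniform in $A_1$, the empirical approximation
\[
g_{\vec s}(x) := \frac{1}{k}\sum_{i=1}^k \mu_{A_2}(x+s_i)
\]
has $\mathbb{E}_{\vec s}\, g_{\vec s}(x) = f(x)$ pointwise. A Marcinkiewicz--Zygmund inequality, applied against a reference probability measure $\nu$ chosen as a normalised indicator of a regular Bohr set containing the essential support of $f$, yields
\[
\mathbb{E}_{\vec s}\,\norm{g_{\vec s}-f}_{p(\nu)}^p \,\ll\, (p/k)^{p/2}\,\norm{\mu_{A_2}}_{\infty(\nu)}^{p},
\]
so taking $k\asymp p\,\alpha_2^{-O(1)}\eta^{-2}$ drives the right-hand side below $\eta^p$.

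Next, a covering/pigeonhole argument in the pseudo-metric $\vec s\mapsto g_{\vec s}$ produces a subset $\Sigma\subseteq A_1^k$ of relative density $\alpha_1^{O_{p,\eta}(1)}$ on which $g_{\vec s}$ is essentially constant in $L^p(\nu)$. Differencing pairs from $\Sigma$ and projecting onto one coordinate yields a set $T\subseteq A_1-A_1$ of density $\tau\gg\alpha_1^{O_{p,\eta}(1)}$ such that every $y\in T$ satisfies $\norm{f(\cdot+y)-f}_{p(\nu)}\leq 2\eta$; by the triangle inequality every $y\in mT-mT$ is then a $2m\eta$-almost-period. A Bogolyubov/Chang-type step, carried out relative to the ambient Bohr set $B'$ rather than the full group, locates a regular sub-Bohr set $B''\subseteq B'$ inside $mT-mT$ with the prescribed rank and size parameters. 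The power $\lo{\alpha_1}^3$ in the rank arises from the interplay between the $L^p$-sampling (which forces $p\sim \lo{\alpha_1}$), Chang's lemma applied to $T$ (contributing $\log(1/\tau)\sim p\lo{\alpha_1}$), and the error budget $\eta\sim 1/\lo{\alpha_1}$ needed to afford the iteration to $mT-mT$ while preserving Bohr-set nesting via Lemma~\ref{lemma:bohrsiz} at each pass.

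Finally, since each $y\in B''$ is an $m\eta$-almost-period, Minkowski gives $\norm{\mu_{B''}\ast f - f}_{p(\nu)}\ll m\eta$. H\"older's inequality in the $\nu$-weighted $L^p$ norms, together with the hypothesis $\abs{S}\leq 2\abs{B}$, then converts this bound into $\abs{\langle \mu_{B''}\ast f - f,\ind{S}\rangle}\leq \epsilon$ once $p$ is of order $\lo{\alpha_1\alpha_2}$; the choice of $\nu$ supported on a Bohr set comparable to $B$ is what makes the resulting factor $(\abs{S}/\abs{G})^{1/q}$ harmless under this hypothesis. The main obstacle is the quantitative threading of $p$, $k$, $m$, $\eta$, and $\tau$ through the three stages so that the rank increment is exactly $O_\epsilon(\lo{\alpha_1}^3\lo{\alpha_2})$ and the size loss $\exp(-O_\epsilon(d\log d+\lo{\alpha_1\alpha_2}\lo{\alpha_1}^3\lo{\alpha_2}))\mu(B')$, while keeping every auxiliary Bohr set regular and contained in $B'$ throughout.
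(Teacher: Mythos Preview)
Your sketch is correct and is precisely the Croot--Sisask/Schoen--Sisask machinery that underlies \cite[Theorem~5.4]{SS}; the paper's own proof of Theorem~\ref{th-ap-int} simply invokes that result as a black box, with the substitutions $A\to -A_2$, $M\to A_1$, $L\to -S$, and $S,B\to B'_{c/d}$, after checking that in the notation of \cite{SS} one has $K\leq 2\alpha_2^{-1}$, $\sigma=1$, and $\eta\geq \alpha_1/2$. So you are unpacking exactly what the paper cites, and the approaches coincide.

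One small inconsistency worth tidying: you first say the $L^p$-sampling ``forces $p\sim \lo{\alpha_1}$'' and later that the H\"older step needs ``$p$ of order $\lo{\alpha_1\alpha_2}$''. In the \cite{SS} argument $p$ is fixed by the final H\"older step against $\ind{S}$, and since $\abs{S}\leq 2\abs{B}$ while $f=\mu_{A_1}\circ\mu_{A_2}$ is supported in a set of size $\asymp\abs{B}/\alpha_1$, one takes $p\asymp \lo{\alpha_1}$; the $\lo{\alpha_2}$ factor enters instead through the doubling constant $K\sim\alpha_2^{-1}$ in the sampling/almost-period density. This is pure bookkeeping and does not affect the soundness of your outline.
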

\begin{proof}
We apply \cite[Theorem 5.4]{SS} with the choices (with apologies for the unfortunate clash in variable naming between papers)
\[A\to -A_2,\quad M\to A_1,\quad L\to -S,\quad S\to B'_{c/d},\quad B\to B'_{c/d}\]
and note that
\[\Abs{-A_2+B'_{c/d}}\leq \Abs{B'+B'_{c/d}}\leq 2\abs{B'}\leq 2\alpha_2^{-1}\abs{A_2}\]
by regularity of $B'$. The statement now almost follows from \cite[Theorem 5.4]{SS} after observing that (in that theorem's language) we have $K\leq 2\alpha_2^{-1}$, $\sigma=1$, and $\eta\geq \alpha_1/2$, except that there the statement has a constraint on the rank and the width of $B''$, rather than the rank and the size. Nonetheless the width condition can be immediately converted into a lower bound for the size of $B''$ with Lemma~\ref{lemma:bohrsiz}.
\end{proof}

We will now use this almost-periodicity together with Lemma~\ref{lem-sift} to deduce the iterative step.
\begin{proof}[Proof of Proposition~\ref{prop-it}]
By averaging there exists some $x\in k\cdot B'+k\cdot B''$ such that
\[\norm{\mu_A\circ \mu_{A}}_{p(\mu_{k\cdot B'}\ast \mu_{k\cdot B''-x})}\geq (1+\epsilon)\mu(B)^{-1}.\]
We now apply Lemma~\ref{lem-sift} with $B_1=k\cdot B'$ and $B_2=k\cdot B''+x$. This produces some $A_1\subseteq k\cdot B'$ and $A_2\subseteq k\cdot B''-x$ such that, with $S=\{x : \mu_{A}\circ \mu_A(x)\geq (1+\epsilon/2)\mu(B)^{-1}\}$,
\[\langle \mu_{A_1}\circ \mu_{A_2},\ind{S}\rangle \geq 1-\epsilon/4\]
and
\[\min\brac{\frac{\abs{A_1}}{\abs{B'}},\frac{\abs{A_2}}{\abs{B''}}}\gg \alpha^{2p+O_{\epsilon}(1)}.\]
We now apply Theorem~\ref{th-ap-int} (with $k\cdot B'$ and $k\cdot B''$ playing the roles of $B$ and $B'$ respectively), noting that we can, without loss of generality, take $S$ to be supported in $A_1-A_2\subseteq k\cdot B'+k\cdot B''-x$ and so by regularity of $B'$
\[\abs{S}\leq \abs{B'+B''}\leq 2\abs{B'}.\]
This produces some $B'''\subseteq k\cdot B''$ of the required rank and size such that
\[\langle \mu_{B'''}\ast \mu_{A_1}\circ \mu_{A_2},\mu_A\circ \mu_A\rangle\geq (1+\epsilon/2)(1-\epsilon/4)(1-\epsilon/8)\mu(B)^{-1}\geq (1+c\epsilon)\mu(B)^{-1}\]
for some absolute constant $c>0$. The result now follows from averaging, since 
\[\langle \mu_{B'''}\ast \mu_{A_1}\circ \mu_{A_2},\mu_A\circ \mu_A\rangle\leq \norm{\mu_{B'''}\ast \mu_{A}}_\infty\norm{\mu_{A_2}\circ \mu_{A_1}\ast \mu_A}_1=\norm{\mu_{B'''}\ast \mu_{A}}_\infty.\]
\end{proof}

\section{Deduction of Theorems~\ref{th-int-gen}, \ref{th-main-int} and \ref{th-3A}}\label{sec-int2}

Finally, in this section we deduce Theorem~\ref{th-int-gen} from Theorem~\ref{th-flat-on-bohr} (using Step 2) and show (using Step 1) how it implies Theorems~\ref{th-main-int} and \ref{th-3A}. The following form of unbalancing relative to Bohr sets is sufficient.

\begin{proposition}\label{prop:Lp_orth}
There is a constant $c>0$ such that the following holds. Let $\epsilon >0$ and $p \geq 2$ be an integer. Let $B \subseteq G$ be a regular Bohr set and $A\subseteq B$ with relative density $\alpha$. Let $\nu : G \to \bbr_{\geq 0}$ be supported on $B_\rho$, where $\rho \leq c\epsilon\alpha/\rk(B)$, such that $\norm{\nu}_1=1$ and $\widehat{\nu}\geq 0$. If
    \[ \norm{(\mu_A-\mu_B) \circ (\mu_{A}-\mu_B)}_{p(\nu)} \geq \epsilon\, \mu(B)^{-1}, \]
    then there exists $p'\ll_\epsilon p$ such that
    \[ \norm{ \mu_{A}\circ \mu_{A}}_{p'(\nu)} \geq \left(1+\tfrac{1}{4}\epsilon\right) \mu(B)^{-1}. \]
\end{proposition}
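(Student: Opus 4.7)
The plan is to reduce to Lemma~\ref{lem-key-spec}, which I would apply to the rescaled function $f := \mu(B)\cdot (\mu_A - \mu_B)\circ(\mu_A - \mu_B)$. This satisfies both hypotheses of that lemma: the Fourier transform $\widehat{f} = \mu(B)\abs{\widehat{\mu_A - \mu_B}}^2$ is non-negative, and the assumption of the proposition translates into $\norm{f}_{p(\nu)}\geq \epsilon$. Lemma~\ref{lem-key-spec} should then deliver some $p'\ll_\epsilon p$ with
\[ \norm{f+1}_{p'(\nu)} \geq 1 + \tfrac{1}{2}\epsilon. \]

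The core step will be to pass from this bound to one on $\norm{\mu(B)\mu_A\circ\mu_A}_{p'(\nu)}$. Expanding,
\[ (\mu_A - \mu_B)\circ(\mu_A - \mu_B) = \mu_A\circ \mu_A - \mu_A\circ \mu_B - \mu_B\circ \mu_A + \mu_B\circ\mu_B, \]
so it suffices to show that each of $\mu(B)\mu_A\circ\mu_B$, $\mu(B)\mu_B\circ\mu_A$, and $\mu(B)\mu_B\circ\mu_B$ is within $O(\epsilon)$ of $1$ uniformly on $\supp(\nu)\subseteq B_\rho$. Here I would invoke the standard regularity of Bohr sets: setting $d = \rk(B)$, for $x\in B_\rho$ one has $\abs{B\cap (B-x)} \geq (1-O(\rho d))\abs{B}$, and the inclusion $A\subseteq B$ then upgrades this to
\[ \abs{A\cap(B-x)} \;\geq\; \abs{A} - \abs{A\cap (B-x)^c} \;\geq\; \abs{A} - \abs{B\setminus (B-x)} \;\geq\; \abs{A}\brac{1 - O(\rho d/\alpha)}. \]
Since $\mu(B)\mu_A\circ\mu_B(x) = \abs{A\cap(B-x)}/\abs{A}$, this quantity sits in $[1 - O(\rho d/\alpha),\, 1]$, and the analogous bound holds for $\mu(B)\mu_B\circ \mu_A$ (by symmetry of $B_\rho$) and $\mu(B)\mu_B\circ\mu_B$ (directly). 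Under $\rho\leq c\epsilon\alpha/d$ with $c$ a small enough absolute constant, each of these will equal $1 + O(\epsilon)$ with an implicit constant as small as I wish.

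Combining, on $\supp(\nu)$ one obtains the pointwise bound $\abs{(f+1)(x) - \mu(B)\mu_A\circ\mu_A(x)} \leq \epsilon/4$, and the triangle inequality in $L^{p'}(\nu)$ then yields
\[ \norm{\mu(B)\mu_A\circ\mu_A}_{p'(\nu)} \;\geq\; \norm{f+1}_{p'(\nu)} - \tfrac{\epsilon}{4} \;\geq\; 1 + \tfrac{\epsilon}{4}, \]
as required. The one delicate point is the regularity estimate above: the factor $\alpha^{-1}$ is forced by passing from $\abs{B\setminus (B-x)}$ (which regularity controls) to the relative density $\abs{A\cap(B-x)}/\abs{A}$ (which is what appears in $\mu(B)\mu_A\circ\mu_B$), and this is precisely why the hypothesis scales as $\rho \ls \epsilon\alpha/d$. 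Otherwise the argument is a clean rescaling of the $\bbf_q^n$-unbalancing step of Proposition~\ref{th-ip}.
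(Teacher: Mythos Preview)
Your proposal is correct and follows essentially the same route as the paper: apply Lemma~\ref{lem-key-spec} to $f=\mu(B)(\mu_A-\mu_B)\circ(\mu_A-\mu_B)$, then absorb the cross-terms $\mu_A\circ\mu_B$, $\mu_B\circ\mu_A$, $\mu_B\circ\mu_B$ into the constant $\mu(B)^{-1}$ using regularity, losing $\tfrac{1}{4}\epsilon$ via the triangle inequality. The only cosmetic difference is that the paper packages the three cross-terms as $g=\mu_A\circ\mu_B+\mu_B\circ\mu_A-\mu_B\circ\mu_B$ and invokes Lemma~\ref{lemma:regConv} to bound $\norm{g-\mu(B)^{-1}}_{L^\infty(\supp\nu)}$, whereas you unpack the regularity estimate by hand via $\abs{A\cap(B-x)}\geq\abs{A}-\abs{B\setminus(B-x)}$; these are the same argument.
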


\begin{proof}
Let us write
    \[ g =\mu_A\circ \mu_B + \mu_B \circ \mu_A - \mu_B\circ\mu_B \]
    and note that for any $p'\geq 1$
    \begin{align*}
        \norm{ \mu_{A}\circ \mu_{A}}_{p'(\nu)} &= \norm{ (\mu_A-\mu_B)\circ (\mu_A-\mu_B) + \mu(B)^{-1} + g - \mu(B)^{-1} }_{p'(\nu)} \\
        &\geq \norm{ (\mu_A-\mu_B)\circ (\mu_A-\mu_B) + \mu(B)^{-1} }_{p'(\nu)} - \norm{ g - \mu(B)^{-1} }_{p'(\nu)}.
    \end{align*}
    To bound the error term, note that $\mu_A\circ \mu_B(0) = \mu(B)^{-1}$ and that by regularity (for example with Lemma~\ref{lemma:regConv}), for $x \in \supp(\nu)$, 
    \begin{align*}
        \abs{\mu_A\circ\mu_B(x) - \mu_A\circ\mu_B(0)} &\leq \alpha^{-1} \mu(B)^{-1} \norm{\mu_B(\cdot+x) - \mu_B}_1 \\
        &\ll\rho \rk(B) \alpha^{-1} \mu(B)^{-1} \\
        &\leq \tfrac{1}{12} \epsilon \mu(B)^{-1},
    \end{align*}
    say, assuming $\rho$ is sufficiently small, and similarly for the other terms constituting $g$. Hence
    \[\norm{g-\mu(B)^{-1}}_{p'(\nu)}\leq \norm{g - \mu(B)^{-1}}_{L^\infty(\supp(\nu))} \leq \tfrac{1}{4}\epsilon\,\mu(B)^{-1}. \]
    It therefore suffices to find some $p'\ll_\epsilon p$ such that
    \[\norm{ (\mu_A-\mu_B)\circ (\mu_A-\mu_B) + \mu(B)^{-1} }_{p'(\nu)}\geq (1+\tfrac{1}{2}\epsilon)\mu(B)^{-1}.\]
This is immediate from Lemma~\ref{lem-key-spec} applied to $f=\mu(B)(\mu_A-\mu_B)\circ (\mu_A-\mu_B)$. 
\end{proof}

To deduce Theorem~\ref{th-int-gen} from Theorem~\ref{th-flat-on-bohr} we will need to pass from the measure $\mu_B$ to $\mu_{B'}\circ \mu_{B'}\ast \mu_{B''}\circ \mu_{B''}$. This is a technical issue with Bohr sets that does not arise in the $\bbf_q^n$ model case (note that these measures are identical when $B=B'=B''=G$).

\begin{proposition}\label{prop:pos_def_measures}
There is a constant $c>0$ such that the following holds. Let $p \geq 2$ be an even integer. Let $f : G \to \bbr$, let $B \subseteq G$ and $B', B'' \subseteq B_{c/\rk(B)}$ all be regular Bohr sets. Then
\[ \norm{ f\circ f }_{p(\mu_{B'}\circ \mu_{B'}\ast \mu_{B''}\circ \mu_{B''})} \geq \tfrac{1}{2} \norm{f*f}_{p(\mu_B)}. \]
\end{proposition}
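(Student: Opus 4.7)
The plan is to work on the Fourier side and combine three ingredients: the non-negativity $\widehat{\nu}\geq 0$ (from the auto-convolution structure of $\nu$, giving $\widehat{\nu} = |\widehat{\mu_{B'}}|^2|\widehat{\mu_{B''}}|^2$, using symmetry of the Bohr sets); the triangle-inequality bound $|\widehat{(f\ast f)^p}(\lambda)|\leq \widehat{(f\circ f)^p}(\lambda)$ valid for every $\lambda$ (since both sides are $p$-fold convolutions on $\widehat G$ of $\widehat f^{\,2}$ and $|\widehat f|^2$ respectively, differing only by phases); and the regularity of the Bohr set $B$ together with the narrowness hypotheses $B'\subseteq B_{c/\rk(B)}$ and $B''\subseteq B_{c/\rk(B')}$.

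Using Parseval and the first two ingredients yields
\[
\|f\ast f\|_{p(\mu_B)}^p \;\leq\; \sum_\lambda |\widehat{\mu_B}(\lambda)|\,\widehat{(f\circ f)^p}(\lambda), \qquad
\|f\circ f\|_{p(\nu)}^p \;=\; \sum_\lambda \widehat{\nu}(\lambda)\,\widehat{(f\circ f)^p}(\lambda),
\]
so it suffices to compare the Fourier weights $|\widehat{\mu_B}(\lambda)|$ and $\widehat{\nu}(\lambda)$. Regularity of $B$ combined with $B'\subseteq B_{c/\rk(B)}$ gives $\|\mu_B - \mu_B\ast\mu_{B'}\|_1 \ll c$, and the standard estimate $\|\widehat g\|_\infty\leq\|g\|_1$ transfers this into the pointwise bound $|\widehat{\mu_B}(\lambda)|\,|1-\widehat{\mu_{B'}}(\lambda)|\ll c$ valid for every $\lambda$. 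The analogous bound for $B''$ holds as well (noting that $\rk(B')\geq \rk(B)$ makes $B''$ narrow enough for the regularity of $B$ to apply), and combining them --- using $|\widehat{\mu_{B'}}|,|\widehat{\mu_{B''}}|\leq 1$ and factoring $1-\widehat{\mu_{B'}}(\lambda)^2\widehat{\mu_{B''}}(\lambda)^2$ appropriately --- leads to $|\widehat{\mu_B}(\lambda)|\leq |\widehat{\mu_{B'}}(\lambda)|^2|\widehat{\mu_{B''}}(\lambda)|^2 + O(c)$ uniformly in $\lambda$.

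Substituting this into the Fourier sum gives the desired bound up to an additive error of $O(c)(f\circ f)(0)^p = O(c)\|f\|_2^{2p}$. The main obstacle is absorbing this error within the allowed $\tfrac12$-factor of loss, since for general $f$ one cannot bound $\|f\|_2^{2p}$ by $\|f\circ f\|_{p(\nu)}^p$. I would handle this via a split of frequencies: for $\lambda$ where $|\widehat{\mu_B}(\lambda)|$ exceeds an absolute threshold, the regularity estimate forces both $\widehat{\mu_{B'}}(\lambda)$ and $\widehat{\mu_{B''}}(\lambda)$ to lie near $1$, so the sharp multiplicative comparison $|\widehat{\mu_B}(\lambda)|\leq 2\widehat{\nu}(\lambda)$ holds and the contribution goes into $2\|f\circ f\|_{p(\nu)}^p$; for the remaining frequencies, $|\widehat{\mu_B}(\lambda)|$ is itself $O(c)$ so the corresponding contribution is trivially controlled. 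Choosing the absolute constant $c$ small enough, tracking constants carefully, and extracting $p$-th roots then yields the claimed inequality $\|f\circ f\|_{p(\nu)}\geq \tfrac12\|f\ast f\|_{p(\mu_B)}$.
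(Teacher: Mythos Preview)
Your frequency-splitting argument has a genuine gap. You correctly identify that the naive Fourier comparison produces an additive error of the form $O(c)\|f\|_2^{2p}$ and that this cannot be absorbed for general $f$. However, the proposed split does not repair this: the two demands on the threshold are incompatible. If the threshold $\tau$ is a fixed absolute constant (so that on the large set the regularity bound $|\widehat{\mu_B}||1-\widehat{\mu_{B'}}|\ll c$ forces $\widehat{\mu_{B'}},\widehat{\mu_{B''}}$ close to $1$ and hence $|\widehat{\mu_B}|\leq 2\widehat{\nu}$), then on the complement you only know $|\widehat{\mu_B}|\leq\tau$, not $O(c)$; the resulting error is $\tau\sum_\lambda(|\widehat f|^2)^{(p)}(\lambda)=\tau\|f\|_2^{2p}$, which is the same unabsorbable term. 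Conversely, taking $\tau=O(c)$ makes the bound $|1-\widehat{\mu_{B'}}|\ll c/\tau=O(1)$ on the large set vacuous. More concretely, there can be characters $\lambda$ with $\widehat{\mu_{B'}}(\lambda)=0$ (hence $\widehat\nu(\lambda)=0$) while $|\widehat{\mu_B}(\lambda)|$ is as large as $\Theta(c)$; no multiplicative comparison $|\widehat{\mu_B}|\leq C\widehat\nu$ is available there, and the mass of $(|\widehat f|^2)^{(p)}$ on such frequencies is exactly what you cannot control.

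The paper avoids this entirely by first working on the physical side. Two applications of the pointwise Bohr-set inequality $\mu_B\leq 2\,\mu_{B_{1+L\rho}}\ast\mu_{B'}^{(L)}$ (Lemma~\ref{lemma:fourierbohr}) yield $\mu_B\leq 4\,\mu_{B_{1+2\rho}}\ast\nu$ pointwise, with no additive remainder. Integrating against $(f\ast f)^p$ and pigeonholing over the $\mu_{B_{1+2\rho}}$-average produces a single translate $t$ with $\|f\ast f\|_{p(\mu_B)}^p\leq 4\,\|f\ast f\|_{p(\nu(\cdot-t))}^p$. Only then does one pass to the Fourier side, using $\widehat\nu\geq 0$ together with the same triangle-inequality estimate $|(\widehat f^{\,2})^{(p)}|\leq(|\widehat f|^2)^{(p)}$ you invoke, to replace $f\ast f$ by $f\circ f$ and discard the translate. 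The loss is thus a clean factor $4$ on the $p$th powers, giving $4^{1/p}\leq 2$ after taking roots. The essential missing ingredient in your approach is this physical-side majorisation; the Fourier comparison of weights alone is too lossy.
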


\begin{proof}
    By an application of Lemma \ref{lemma:fourierbohr} with $L=4$, $\rho = c/4\rk(B)$ and $\nu=\mu_{B'}\ast \mu_{B'}\ast \mu_{B''}\ast\mu_{B''}$, we have
\begin{align*}
\mu_B 
&\leq 2\mu_{B_{1+4\rho}}*\nu.
\end{align*}
Hence
    \begin{align*}
        \norm{f*f}_{p(\mu_B)}^p &= \E_{x\in G}\, \mu_B(x)\, f*f(x)^p \\
        &\leq 2\,\E_{x\in G} \left(\mu_{B_{1+4\rho}}*\nu\right)(x) \, f*f(x)^p \\
        &= 2\, \E_{t \in B_{1+4\rho}} \E_{x\in G}\, \nu(x-t) \, f*f(x)^p.
        \end{align*}
By averaging there exists some $t$ such that
        \begin{align*}
            \norm{f*f}_{p(\mu_B)}^p &\leq 2\,\E_{x\in G}\, \nu(x-t) \, f*f(x)^p \\
            &= 2\sum_{\gamma \in \Ghat} \widehat{\nu}(\gamma)\gamma(-t) (\fhat^2)^{(p)}(\gamma) \\
            &\leq 2\sum_{\gamma \in \Ghat} \widehat{\nu}(\gamma) (\Abs{\fhat}^2)^{(p)}(\gamma) \\
            &= 2\norm{ f\circ f }_{p(\nu)}^p,
        \end{align*}
where we have used the fact that $\widehat{\nu}\geq 0$.
\end{proof}

\begin{proof}[Proof of Theorem~\ref{th-int-gen}]

We may, without loss of generality, assume that $\delta$ is sufficiently small in terms of $\epsilon$ and $k$. Let $p'\ll_\epsilon p$ satisfy the condition in Proposition~\ref{prop:Lp_orth}. Let $A'$ and $B$ be the regular Bohr sets provided by Theorem~\ref{th-flat-on-bohr} applied with $p$ replaced by $p'$ and $\epsilon$ replaced by $\epsilon/8$. We claim that this choice satisfies the conclusion of Theorem~\ref{th-int-gen}. It suffices to prove
\[\norm{(\mu_{A'}-\mu_B)\ast(\mu_{A'}-\mu_B)}_{p(\mu_{k\cdot B'})}\leq \epsilon \mu(B)^{-1}.\]
Suppose not. Let $B''=B'_{\rho}$ and $B'''=B''_{\rho'}$ be regular Bohr sets with $\rho=c_1/d$ and $\rho'=c_2/d$ for some sufficiently small constants $c_1,c_2>0$. By Proposition~\ref{prop:pos_def_measures} we have, if we let $f=\mu_{A'}-\mu_B$ for brevity,
\[\norm{f\ast f}_{p(\mu_{k\cdot B'})}\leq 2 \norm{f\circ f}_{p(\nu)}\]
where $\nu=\mu_{k\cdot B''}\circ \mu_{k\cdot B''}\ast \mu_{k\cdot B'''}\circ \mu_{k\cdot B'''}$. In particular, $\norm{f \circ f}_{p(\nu)}>\tfrac{1}{2}\epsilon\mu(B)^{-1}$, and so by Proposition~\ref{prop:Lp_orth} (noting that $\nu$ is supported on $k(B''+B''+B'''+B''')\subseteq B'_{4k\rho}\subseteq B_{c/d}$) we deduce that
\[\norm{\mu_{A'}\circ \mu_{A'}}_{p'(\nu)} \geq (1+\epsilon/8)\mu(B)^{-1},\]
which contradicts the conclusion of Theorem~\ref{th-flat-on-bohr}, and we are done. \end{proof}

Finally, to apply Theorem~\ref{th-int-gen} to three-term progressions and finding long arithmetic progressions in $A+A+A$, we record the following version of the H\"{o}lder lifting Step 1.

\begin{proposition}\label{prop-holder}
There is a constant $c>0$ such that the following holds. Let $\epsilon >0$. Let $B \subseteq G$ be a regular Bohr set and $A\subseteq B$ with relative density $\alpha$, and let $B' \subseteq B_{c\epsilon\alpha/\rk(B)}$ be a regular Bohr set and $C\subseteq B'$ with relative density $\gamma$. Either
\begin{enumerate}
\item $\abs{ \langle \mu_A*\mu_A, \mu_{C} \rangle - \mu(B)^{-1} } \leq \epsilon \mu(B)^{-1}$ or
\item there is some $p \ll\lo{\gamma}$ such that $\norm{ (\mu_A-\mu_B)*(\mu_A-\mu_B)}_{p(\mu_{B'})} \geq \tfrac{1}{2}\epsilon \mu(B)^{-1}$.
\end{enumerate}
\end{proposition}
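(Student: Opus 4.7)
The plan is to mimic the finite-field Lemma~\ref{lem-ff1}, with $\mu_B$ playing the role that the constant function $1$ plays there, and using regularity of $B$ to show that the "cross terms" which no longer cancel exactly are at any rate almost constant on the narrow set $B'$ supporting $\mu_C$.

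First I would expand
\[ \mu_A\ast\mu_A \;=\; (\mu_A - \mu_B)\ast(\mu_A - \mu_B) \;+\; 2\mu_A\ast\mu_B \;-\; \mu_B\ast\mu_B, \]
so that
\[ \langle \mu_A\ast\mu_A, \mu_C\rangle - \mu(B)^{-1} \;=\; \langle (\mu_A-\mu_B)\ast(\mu_A-\mu_B), \mu_C\rangle \;+\; \langle 2\mu_A\ast\mu_B - \mu_B\ast\mu_B - \mu(B)^{-1}, \mu_C\rangle. \]
Since $A\subseteq B$ and a Bohr set satisfies $B=-B$, one checks $\mu_A\ast\mu_B(0) = \mu(B)^{-1}$ and $\mu_B\ast\mu_B(0)=\mu(B)^{-1}$, so the displayed cross term vanishes at $x=0$. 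By regularity of $B$ (in the form used in the proof of Proposition~\ref{prop:Lp_orth}) one has, for $x\in B'\subseteq B_{c\epsilon\alpha/\rk(B)}$,
\[ \lvert\mu_A\ast\mu_B(x) - \mu(B)^{-1}\rvert \;\leq\; \|\mu_A\|_\infty\,\|\mu_B(\cdot - x) - \mu_B\|_1 \;\ll\; c\epsilon\,\mu(B)^{-1}, \]
and an identical bound for $\mu_B\ast\mu_B$. Choosing the absolute constant $c$ small enough, the pointwise bound on $\supp(\mu_C)\subseteq B'$ is at most $\tfrac{\epsilon}{3}\mu(B)^{-1}$, and since $\mu_C$ is a probability measure,
\[ \bigl\lvert\langle 2\mu_A\ast\mu_B - \mu_B\ast\mu_B - \mu(B)^{-1},\mu_C\rangle\bigr\rvert \;\leq\; \tfrac{\epsilon}{3}\mu(B)^{-1}. \]

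If alternative (1) fails, combining these gives $\lvert\langle (\mu_A-\mu_B)\ast(\mu_A-\mu_B),\mu_C\rangle\rvert > \tfrac{2\epsilon}{3}\mu(B)^{-1}$. Writing $f = (\mu_A-\mu_B)\ast(\mu_A-\mu_B)$ and rewriting the inner product against the reference measure $\mu_{B'}$, set $g=\mu_C/\mu_{B'}$ (well defined on $B'$ since $\supp(\mu_C)\subseteq B'$), which satisfies $\|g\|_{1(\mu_{B'})}=1$ and $\|g\|_\infty\leq \gamma^{-1}$, hence $\|g\|_{q(\mu_{B'})}\leq \gamma^{-1/p}$ for $1/p+1/q=1$ by log-convexity. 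H\"{o}lder then yields
\[ \bigl\lvert\langle f,\mu_C\rangle\bigr\rvert \;=\; \bigl\lvert\langle f,g\rangle_{\mu_{B'}}\bigr\rvert \;\leq\; \|f\|_{p(\mu_{B'})}\,\gamma^{-1/p}, \]
and taking $p = \lceil K\lo{\gamma}\rceil$ for a sufficiently large absolute constant $K$ makes $\gamma^{-1/p}\leq 4/3$, giving $\|f\|_{p(\mu_{B'})}\geq \tfrac{\epsilon}{2}\mu(B)^{-1}$ as required.

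The only non-routine step is the regularity estimate in the middle paragraph: one must check that the constants can be arranged so that the deviation of $2\mu_A\ast\mu_B - \mu_B\ast\mu_B$ from $\mu(B)^{-1}$ on $\supp(\mu_C)$ is strictly smaller than the discrepancy $\epsilon\mu(B)^{-1}$ one is trying to detect. Once that is in place, the H\"{o}lder step is word-for-word the same as in the finite-field case, with $\gamma^{-1/p}$ absorbing the only loss and forcing $p\ll \lo{\gamma}$.
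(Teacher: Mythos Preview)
Your proposal is correct and follows essentially the same route as the paper: expand $\mu_A\ast\mu_A$ around $\mu_B$, use regularity to show the cross terms $2\mu_A\ast\mu_B-\mu_B\ast\mu_B$ are within $O(\epsilon)\mu(B)^{-1}$ of $\mu(B)^{-1}$ on $\supp(\mu_C)$, and then apply H\"older with respect to $\mu_{B'}$ to extract the $L^p$ lower bound with $p\ll\lo{\gamma}$. The only differences are cosmetic (your constants $\tfrac{\epsilon}{3},\tfrac{2\epsilon}{3}$ versus the paper's $\tfrac{\epsilon}{4},\tfrac{3\epsilon}{4}$, and your phrasing of the H\"older step via $g=\mu_C/\mu_{B'}$ and log-convexity rather than directly pairing against $\ind{C}$).
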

\begin{proof}
We first note that 
\[\langle \mu_A\ast \mu_A,\mu_C\rangle = \langle (\mu_A-\mu_B)\ast(\mu_A-\mu_B),\mu_C\rangle+2\langle \mu_A\ast \mu_B,\mu_C\rangle -\langle \mu_B\ast \mu_B,\mu_C\rangle.\]
By the regularity of $B$ (more specifically Lemma~\ref{lemma:regConv}) and the fact that $C\subseteq B_{c\epsilon /\rk(B)}$, we have
\[\abs{\langle \mu_B\ast \mu_B,\mu_C\rangle-\mu(B)^{-1}}\leq \norm{\mu_B}_\infty\norm{\mu_B\ast \mu_C-\mu_B}_1\leq \tfrac{1}{8}\epsilon \mu(B)^{-1}.\]
Similarly, the fact that  $C\subseteq B_{c\epsilon \alpha/\rk(B)}$ implies that
\[\abs{\langle \mu_A\ast \mu_B,\mu_C\rangle-\mu(B)^{-1}}\leq \norm{\mu_A}_\infty\norm{\mu_B\ast \mu_C-\mu_B}_1\leq \tfrac{1}{16}\epsilon \mu(B)^{-1}.\]
It follows that, with $f=(\mu_A-\mu_B)\ast (\mu_A-\mu_B)$, we have
\[\abs{\langle \mu_A\ast \mu_A,\mu_C\rangle-\mu(B)^{-1}-\langle f,\mu_C\rangle}\leq \tfrac{1}{4}\epsilon\mu(B)^{-1}.\]
Therefore, if the first possibility fails, then 
\[\gamma^{-1}\abs{\langle f, \ind{C}\rangle_{\mu_{B'}}}=\abs{\langle f,\mu_C\rangle}\geq\tfrac{3}{4}\epsilon\mu(B)^{-1}.\]
By H\"{o}lder's inequality, for any $p\geq 1$, 
\[\norm{f}_{p(\mu_{B'})}\gamma^{1-1/p}\geq \abs{\langle f,\ind{C}\rangle_{\mu_{B'}}}.\]
We can choose some $p\ll \lo{\gamma}$ such that $\gamma^{-1/p}\leq 3/2$, and the proof is complete.
\end{proof}

\subsection{Three-term arithmetic progressions}

Theorem~\ref{th-main-int} is an immediate consequence of the following result that gives a lower bound for the number of three-term arithmetic progressions in an arbitrary set, coupled with the observation that if $A$ contains only trivial three-term arithmetic progressions then this count is at most $N$.

\begin{theorem}[Kelley-Meka]\label{th-main-int-count}
If $A\subseteq \{1,\ldots,N\}$ has size $\abs{A}=\alpha N$, then $A$ contains at least
\[\exp(-O(\lo{\alpha}^{12}))N^2\]
many three-term arithmetic progressions.
\end{theorem}
\begin{proof}
As usual, we begin by considering $A\subseteq\{1,\ldots,N\}$ as a subset of $G=\mathbb{Z}/(2N+1)\mathbb{Z}$ -- the density of this set within $G$ is still $\asymp \alpha$, and any three-term arithmetic progression in $A\subseteq G$ yields one in $A\subseteq \{1,\ldots,N\}$. 

We apply Theorem~\ref{th-int-gen} with $\epsilon=1/4$, $k=2$, and $p=\lceil K\lo{\alpha}\rceil$ for some large constant $K$. Let $A''=A'\cap B'$. If 
\[\langle \mu_{A'}\ast \mu_{A'},\mu_{2\cdot A''}\rangle \geq \tfrac{1}{2}\mu(B)^{-1}\]
we are done, since the left-hand side is at most $\ll\alpha^{-3}\mu(B)^{-2}\mu(B')^{-1}$ times the number of three-term arithmetic progressions in $A$, and by Lemma~\ref{lemma:bohrsiz} we have
\[\mu(B)\mu(B')\geq \exp(-O_\epsilon(\lo{\alpha}^{12})).\]
Otherwise, we are in the second case of Proposition~\ref{prop-holder}, which contradicts the conclusion of Theorem~\ref{th-int-gen}, and we are done. 
\end{proof}

\subsection{Arithmetic progressions in $A+A+A$}

As in the previous section, Theorem \ref{th-3A} follows immediately from the following statement for general groups, by embedding $\{1,\ldots,N\}$ in a cyclic group $\bbz/M\bbz$ for a prime $M$ between $2N$ and $4N$.

\begin{theorem}
    If $A \subseteq G$ has size $\alpha N$, then $A+A+A$ contains a translate of a Bohr set $B$ with
    \[ \rk(B) \ll \lo{\alpha}^{9} \quad\text{and}\quad \mu(B) \geq \exp(-O(\lo{\alpha}^{12}). \]
    In particular, if $G = \bbz/N\bbz$ for a prime $N$, then $A+A+A$ contains an arithmetic progression of length 
    \[\geq \exp(-O(\lo{\alpha}^3))N^{\Omega(1/\lo{\alpha}^{9})}.\]
\end{theorem}
\begin{proof}
We apply Theorem~\ref{th-int-gen} with $\epsilon=1/4$, $k=1$, and $p=\lceil K\lo{\alpha}\rceil$ for some large constant $K$. Let $B,B'$ be the Bohr sets produced by that conclusion, and $A' = (A-x) \cap B$ the corresponding restricted translate of $A$.

We first argue that $\abs{(A'+A')\cap B'}\geq (1-\alpha/4)\abs{B'}$. Indeed, otherwise if we let $C=B'\backslash (A'+A')$, then the first case of Proposition~\ref{prop-holder} is violated and the conclusion of Theorem~\ref{th-int-gen} means the second also cannot hold.

Let $B''=B'_{c\alpha/d}$, where $c>0$ is some small constant. We argue that $B''\subseteq A'+A'+A'$. If not, there is some $x\in B''$ such that $(A'+A')\cap (x-A')=\emptyset$, and so
\[\abs{A'\cap(B'-x)}=\abs{(x-A')\cap B'}\leq \abs{B'\backslash (A'+A')}\leq \tfrac{\alpha}{4}\abs{B'}.\]
By regularity, however, the left-hand side is at least 
\[\abs{A'\cap B'}-\abs{B\backslash (B'-x)}\geq \abs{A'\cap B'}-\tfrac{1}{4}\alpha \abs{B'}\geq \tfrac{\alpha}{2}\abs{B'},\]
which is a contradiction.

We have found some Bohr set $B''$ of rank $O(\lo{\alpha}^{9})$ and density 
\[\mu(B'')\geq \exp(-O(\lo{\alpha}^{12}))\]
such that $B''\subseteq A'+A'+A'$. It remains to note that $A'+A'+A'$ is contained in a translate of $A+A+A$ and to appeal to Lemma~\ref{lem-bohrap} to find an arithmetic progression in $B''$ of length
\[\geq \exp(-O(\lo{\alpha}^3))N^{\Omega(1/\lo{\alpha}^{9})}.\]
\end{proof}

\appendix

\section{Bohr sets}\label{app-bohr}
In abelian groups more general than $\bbf_q^n$, a useful substitute for genuine subgroups is the class of Bohr sets, introduced to additive combinatorics by Bourgain \cite{Bo}. Below we collect some standard facts about Bohr sets.

\begin{definition}[Bohr sets]
For a non-empty $\Gamma\subseteq \widehat{G}$ and $\nu\in [0,2]$ we define the Bohr set $B=\mathrm{Bohr}_\nu(\Gamma)$ as 
\[\mathrm{Bohr}_\nu(\Gamma)=\left\{ x\in G : \abs{1-\gamma(x)}\leq \nu\textrm{ for all }\gamma\in\Gamma\right\}.\]
We call $\Gamma$ the \emph{frequency set} of $B$ and $\nu$ the \emph{width}, and define the \emph{rank} of $B$ to be the size of $\Gamma$, denoted by $\rk(B)$. We note here that all Bohr sets are symmetric and contain $0$.

In fact, when we speak of a Bohr set we implicitly refer to the triple $(\Gamma,\nu,\mathrm{Bohr}_\nu(\Gamma))$, since the set $\mathrm{Bohr}_{\nu}(\Gamma)$ alone does not uniquely determine the frequency set nor the width. When we use subset notation, such as $B'\subseteq B$, this refers only to the set inclusion (and does not, in particular, imply any particular relation between the associated frequency sets or width functions). Furthermore, if $B=\mathrm{Bohr}_\nu(\Gamma)$ and $\rho\in(0,1]$, then we write $B_\rho$ for the same Bohr set with the width dilated by $\rho$, i.e. $\mathrm{Bohr}_{\rho\nu}(\Gamma)$, which is known as a \emph{dilate} of $B$.
\end{definition}

Bohr sets are, in general, not even approximately group-like, and may grow exponentially under addition. Bourgain \cite{Bo} observed that certain Bohr sets are approximately closed under addition in a weak sense which is suitable for our applications.

\begin{definition}[Regularity\footnote{The constant $100$ here is fairly arbitrary. Smaller constants are permissible.}]
A Bohr set $B$ of rank $d$ is regular if for all $\abs{\kappa}\leq 1/100d$ we have 
\[(1-100 d\abs{\kappa})\abs{B}\leq \abs{B_{1+\kappa}}\leq(1+ 100 d\abs{\kappa})\abs{B}.\]
\end{definition}

We record here the useful observation, frequently used in this paper, that if $(k,\abs{G})=1$ and $B$ is a regular Bohr set of rank $d$ then $k\cdot B$ is also a regular Bohr set of rank $d$ (and of course the same density), simply by replacing each character in the frequency set by an appropriate dilate.

For further introductory discussion of Bohr sets see, for example, \cite[Chapter 4]{TV}, in which the following basic lemmas are established.

\begin{lemma}\label{lemma:bohrreg}
For any Bohr set $B$ there exists $\rho\in[\tfrac{1}{2},1]$ such that $B_\rho$ is regular.
\end{lemma}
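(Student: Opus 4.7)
The plan is to argue by contradiction, using the well-known volume-growth bound for Bohr sets to constrain how fast $|B_\rho|$ can change as $\rho$ varies over $[1/2,1]$. First I would invoke (as a standard tool, which the appendix presumably records elsewhere) the doubling-type estimate $|B_1|/|B_{1/2}| \leq C_0^d$ for some absolute constant $C_0$, where $d=\rk(B)$. Reparametrising by $\psi(u) = \log\lvert B_{e^u}\rvert$ for $u \in [-\log 2, 0]$, this gives a non-decreasing function with $\psi(0) - \psi(-\log 2) \leq d\log C_0$.

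The next step is to translate the definition of regularity into a local condition on $\psi$. The regularity of $B_\rho$ at the point $u = \log \rho$ corresponds to the requirement that, for every $|s| \leq \log(1 + 1/100d)$, one has $\lvert\psi(u+s)-\psi(u)\rvert \ll d|s|$; in other words, $\psi$ should be $O(d)$-Lipschitz in a window of width $\asymp 1/d$ around $u$. Monotonicity of $\psi$ means that only the upper bound (for $s>0$) and the lower bound (for $s<0$) carry any content.

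The core of the proof is then a Vitali-type covering argument. Suppose for contradiction that no $\rho \in [1/2,1]$ yields a regular $B_\rho$. Then to each $u \in (-\log 2, 0)$ one can associate an interval $I_u \ni u$ of length at most $2\log(1+1/100d) \ll 1/d$ on which $\psi$ rises by more than (say) $50 d \cdot \lvert I_u\rvert$. A standard Vitali selection extracts a disjoint subfamily $\{I_{u_j}\}$ whose total length is a definite fraction of $\log 2$; summing the rises and using monotonicity gives
\[
\psi(0) - \psi(-\log 2) \;\geq\; \sum_j \bigl(\psi(\max I_{u_j}) - \psi(\min I_{u_j})\bigr) \;\gg\; d,
\]
with an implicit constant one can make as large as desired by cranking up the $100$ in the definition of regularity. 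For $C_0$ an absolute constant this contradicts the doubling bound $\psi(0)-\psi(-\log 2)\leq d\log C_0$, completing the proof.

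The main obstacle is arithmetic bookkeeping: one must verify that the slack built into the constant $100$ in the regularity definition is enough to overcome both the multiplicative loss from the Vitali selection and the constant $\log C_0$ coming from the doubling bound. Everything else — the monotonicity, the translation between regularity and local Lipschitz behaviour, and the covering argument itself — is routine.
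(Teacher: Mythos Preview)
The paper does not prove this lemma: it is stated in the appendix and attributed to \cite[Chapter~4]{TV}. Your sketch is essentially the standard argument found there (originating with Bourgain) and is correct; the doubling input you need, $\lvert B_1\rvert/\lvert B_{1/2}\rvert\leq 8^d$, is precisely Lemma~\ref{lemma:bohrsiz}, and with the Vitali factor of $3$ the numerics go through comfortably against the fixed constant~$100$ in the definition of regularity, so no ``cranking up'' is actually required.
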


\begin{lemma}\label{lemma:bohrsiz}
If $\rho\in (0,1)$ and $B$ is a Bohr set of rank $d$, then $\abs{B_\rho}\geq (\rho/4)^d\abs{B}$.
\end{lemma}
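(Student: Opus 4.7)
The plan is to prove $\abs{B_\rho}\geq (\rho/4)^d\abs{B}$ by a straightforward covering argument: I would show that $B$ can be covered by at most $(4/\rho)^d$ translates of $B_\rho$, from which
\[\abs{B}\leq (4/\rho)^d\abs{B_\rho}\]
is an immediate consequence and delivers the stated bound.

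To set up the covering, write $\Gamma=\{\gamma_1,\ldots,\gamma_d\}$ with widths $\nu_i=\nu(\gamma_i)$ and consider the frequency map $\phi\colon G\to\bbc^d$ given by $\phi(x)=(\gamma_1(x),\ldots,\gamma_d(x))$. For $x\in B$, each coordinate $\gamma_i(x)$ lies in the arc $D_i=\{z\in\bbc: \abs{z}=1,\ \abs{1-z}\leq\nu_i\}$, which sits inside a Euclidean disc of radius $\nu_i$ and in particular has diameter at most $2\nu_i$. I would tile each $D_i$ by sub-arcs of Euclidean diameter at most $\rho\nu_i$, which requires at most $\lceil 2/\rho\rceil$ pieces; taking products partitions $\prod_i D_i$ into at most $\lceil 2/\rho\rceil^d$ cells. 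Since $\rho<1$ gives $2/\rho\geq 2$ and hence $\lceil 2/\rho\rceil\leq 2/\rho+1\leq 4/\rho$, the total number of cells is at most $(4/\rho)^d$.

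The key observation is that whenever $x,y\in B$ have $\phi(x),\phi(y)$ in the same cell, then $\abs{\gamma_i(x)-\gamma_i(y)}\leq\rho\nu_i$ for every $i$; multiplying by $\overline{\gamma_i(y)}$, which has modulus one, gives $\abs{1-\gamma_i(x-y)}\leq\rho\nu_i$, i.e.\ $x-y\in B_\rho$. Picking a single representative $x_j\in B$ from each non-empty cell therefore exhibits
\[B\subseteq \bigcup_j (x_j + B_\rho),\]
a union of at most $(4/\rho)^d$ translates, which completes the proof.

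There is no genuine obstacle here — the entire argument is a direct pigeonhole on the frequency map combined with the definition of a Bohr set. The only point requiring minor care is the counting inequality $\lceil 2/\rho\rceil\leq 4/\rho$ for $\rho\in(0,1)$, which accounts exactly for the constant $4$ in the conclusion; in fact this approach yields the slightly stronger bound $\abs{B_\rho}\geq \lceil 2/\rho\rceil^{-d}\abs{B}$, which is more than sufficient.
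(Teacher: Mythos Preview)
The paper does not give its own proof of this lemma, deferring instead to \cite{TV}. Your covering/pigeonhole argument is precisely the standard one found there (adapted to the Euclidean-distance definition of Bohr sets used here), so the overall strategy is correct and matches the intended reference.

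There is, however, one small slip in the counting. You assert that because $D_i$ has Euclidean diameter at most $2\nu_i$, it can be tiled by $\lceil 2/\rho\rceil$ sub-arcs of Euclidean diameter at most $\rho\nu_i$. But the quantity governing how many sub-arcs are needed is the \emph{arc-length} of $D_i$, not its Euclidean diameter, and these can differ by up to a factor of $\pi/2$. For example, when $\nu_i=2$ the set $D_i$ is the full unit circle, of Euclidean diameter $2$ but arc-length $2\pi$; covering it by sub-arcs of Euclidean diameter at most $2\rho$ then requires roughly $\pi/\rho$ pieces, not $2/\rho$. The fix is immediate: the arc-length of $D_i$ is $4\arcsin(\nu_i/2)\leq \pi\nu_i$, and any sub-arc of arc-length at most $\rho\nu_i$ automatically has Euclidean diameter at most $\rho\nu_i$, so $\lceil \pi/\rho\rceil$ pieces per coordinate suffice. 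One checks that $\lceil \pi/\rho\rceil\leq 4/\rho$ for all $\rho\in(0,1)$, which recovers the stated bound $\abs{B_\rho}\geq (\rho/4)^d\abs{B}$ exactly as you describe. Your claimed sharpening to $\lceil 2/\rho\rceil^{-d}$, though, is not supported by this argument.
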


The following standard lemmas indicate how regularity of Bohr sets will be exploited. The following is proved as, for example, \cite[Lemma 4.5]{BS}.

\begin{lemma}\label{lemma:regConv}
If $B$ is a regular Bohr set of rank $d$ and $\mu:G\to\bbr_{\geq 0}$ is supported on $B_\rho$, with $\rho \in (0,1)$, then
\[ \norm{ \mu_B*\mu - \mu_B }_{1} \ll \rho d\norm{\mu}_1. \]
\end{lemma}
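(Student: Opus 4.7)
The plan is to reduce the convolution estimate to a translation estimate for $\mu_B$, and then exploit regularity through the inclusion $B + B_\rho \subseteq B_{1+\rho}$.

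First I would use that $\mu$ is a probability measure to write
\[ \mu_B \ast \mu(x) - \mu_B(x) = \bbe_y \mu(y)\bigl[\mu_B(x-y)-\mu_B(x)\bigr], \]
and then the triangle inequality in $L^1$ gives
\[ \norm{\mu_B \ast \mu - \mu_B}_1 \leq \bbe_y \mu(y)\norm{\tau_y \mu_B - \mu_B}_1, \]
where $\tau_y\mu_B(x)=\mu_B(x-y)$. Since $\supp(\mu)\subseteq B_\rho$, it now suffices to bound $\norm{\tau_y \mu_B - \mu_B}_1$ uniformly for $y\in B_\rho$.

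The key geometric input is that for $y\in B_\rho$ and $x\in B$ one has $|1-\gamma(x+y)|\leq |1-\gamma(x)|+|1-\gamma(y)|\leq (1+\rho)\nu(\gamma)$ for every $\gamma$ in the frequency set of $B$, hence $B+y\subseteq B_{1+\rho}$. Combined with $B\subseteq B_{1+\rho}$, this yields $B\cup (B+y)\subseteq B_{1+\rho}$, and elementary set theory gives
\[ \abs{B\triangle (B+y)} = 2\bigl(\abs{B}-\abs{B\cap(B+y)}\bigr) \leq 2\bigl(\abs{B_{1+\rho}}-\abs{B}\bigr). \]
Dividing by $\abs{B}$ we obtain $\norm{\tau_y\mu_B-\mu_B}_1 \leq 2\bigl(\abs{B_{1+\rho}}/\abs{B}-1\bigr)$.

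Assuming $\rho\leq 1/100d$, regularity of $B$ gives $\abs{B_{1+\rho}}/\abs{B}\leq 1+100 d\rho$, whence $\norm{\tau_y\mu_B-\mu_B}_1\ll d\rho$ uniformly in $y\in B_\rho$, and substituting back yields the claimed bound. For $\rho > 1/100d$ the trivial estimate $\norm{\mu_B\ast\mu-\mu_B}_1\leq \norm{\mu_B\ast\mu}_1+\norm{\mu_B}_1=2$ is already $\ll d\rho$, so the two regimes combine cleanly. There is no real obstacle here; the only point to be a little careful about is the regime split, since the hypothesis of the lemma allows any $\rho\in(0,1)$ while the regularity definition only gives meaningful information for $\abs{\kappa}\leq 1/100d$.
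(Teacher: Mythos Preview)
Your argument is correct. The paper does not actually prove this lemma; it merely cites \cite[Lemma~4.5]{BS} for the proof, and what you have written is exactly the standard argument found there: reduce to a uniform translation estimate via the triangle inequality, use $B+B_\rho\subseteq B_{1+\rho}$ to bound $\abs{B\triangle(B+y)}$ by $2(\abs{B_{1+\rho}}-\abs{B})$, invoke regularity for $\rho\leq 1/100d$, and handle larger $\rho$ trivially.
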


The following is a minor generalization of, for example, \cite[Lemma 4.7]{BS}, which is stated with $\nu = \mu_{B'}^{(L)}$ for a subset $B' \subseteq B_\rho$; the proof is identical.

\begin{lemma}\label{lemma:fourierbohr}
There is a constant $c>0$ such that the following holds.  Let $B$ be a regular Bohr set of rank $d$ and $L\geq 1$ be any integer. If $\nu:G\to\bbr_{\geq 0}$ is supported on $L B_\rho$, where $\rho \leq c/Ld$, and $\norm{\nu}_1=1$, then 
\[\mu_B \leq 2\mu_{B_{1+L\rho}}\ast \nu.\]
\end{lemma}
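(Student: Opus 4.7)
The plan is to prove the pointwise inequality $\mu_B(x)\leq 2(\mu_{B_{1+L\rho}}\ast\mu_{B'}^{(L)})(x)$ directly, by unpacking the convolution on the right-hand side. The inequality is trivial when $x\notin B$, since then $\mu_B(x)=0$ while the right-hand side is non-negative, so everything reduces to the case $x\in B$.

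First I would record the standard Bohr-set arithmetic: the $L$-fold sumset $B'+\cdots+B'$ is contained in $B_{L\rho}$, which follows from the triangle inequality $\abs{1-\gamma(y_1+\cdots+y_L)}\leq \sum_i\abs{1-\gamma(y_i)}$ combined with $B'\subseteq B_\rho$, provided $L\rho\leq 1$ (certainly true with $c\leq 1$). Consequently, the probability measure $\mu_{B'}^{(L)}$ is supported on $B_{L\rho}$.

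Next, for any $x\in B$ and any $t\in B_{L\rho}$, we have $x-t\in B+B_{L\rho}\subseteq B_{1+L\rho}$ by the same triangle inequality (and the symmetry of $B_{L\rho}$). Hence $\mu_{B_{1+L\rho}}(x-t)=\mu(B_{1+L\rho})^{-1}$ whenever $\mu_{B'}^{(L)}(t)\neq 0$, and this constant factors cleanly out of the convolution:
\[(\mu_{B_{1+L\rho}}\ast\mu_{B'}^{(L)})(x) = \mu(B_{1+L\rho})^{-1}\cdot \E_t\mu_{B'}^{(L)}(t) = \mu(B_{1+L\rho})^{-1}.\]

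Finally, I would invoke regularity of $B$ directly: taking $c\leq 1/100$ so that $L\rho\leq 1/(100d)$, the definition of regularity gives $\abs{B_{1+L\rho}}\leq (1+100dL\rho)\abs{B}\leq 2\abs{B}$, hence $\mu(B_{1+L\rho})^{-1}\geq \tfrac{1}{2}\mu(B)^{-1}=\tfrac{1}{2}\mu_B(x)$ for $x\in B$, which combined with the previous identity yields the claim. There is no real obstacle; the only care needed is choosing $c$ small enough that both the Bohr-set inclusion $L\rho\leq 1$ and the regularity estimate hold simultaneously.
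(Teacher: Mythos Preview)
Your proof is correct and is precisely the standard argument for this lemma. The paper does not actually supply its own proof here; it simply cites \cite[Lemma~4.7]{BS}, whose proof is the same direct unpacking of the convolution combined with regularity that you give.
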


Finally, we note the following simple lemma, which is useful for finding arithmetic progressions.

\begin{lemma}\label{lem-bohrap}
If $N$ is a prime and $B\subseteq \mathbb{Z}/N\mathbb{Z}$ is a Bohr set of rank $d$, then $B$ contains an arithmetic progression of length 
\[\gg \abs{B}^{1/d}.\]
\end{lemma}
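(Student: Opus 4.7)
The plan is to realize $B$ as the preimage of a box under a map to a torus and then apply pigeonhole. Write $B=\mathrm{Bohr}_\nu(\Gamma)$ with $\Gamma=\{\gamma_1,\ldots,\gamma_d\}$. Since $G=\bbz/N\bbz$, each $\gamma_j$ has the form $x\mapsto e^{2\pi i\xi_j x/N}$ for some $\xi_j\in\bbz/N\bbz$, and the defining inequality $\abs{1-\gamma_j(x)}\leq \nu(\gamma_j)$ is equivalent to $\|\xi_j x/N\|_{\bbr/\bbz}\leq \rho_j$ for some $\rho_j\asymp \nu(\gamma_j)$ (using $\abs{1-e^{2\pi i\theta}}=2\abs{\sin(\pi\theta)}$). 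Define $\phi\colon \bbz/N\bbz\to(\bbr/\bbz)^d$ by $\phi(x)=(\xi_1x/N,\ldots,\xi_dx/N)$, so that $x\in B$ precisely when $\phi(x)$ lies in the box $K=\prod_{j=1}^d[-\rho_j,\rho_j]$ (read modulo $1$).

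The heart of the argument is to locate a nonzero step $s$ whose small multiples all remain in $B$. Let $L$ be the largest integer with $(2L)^d<\abs{B}$, so that $L\asymp \abs{B}^{1/d}$, and subdivide $K$ into $(2L)^d$ sub-boxes of side $\rho_j/L$ in the $j$-th coordinate. Since $\phi$ sends $B$ into $K$ and $\abs{B}>(2L)^d$, pigeonhole produces distinct $s_1,s_2\in B$ landing in a common sub-box. Setting $s=s_2-s_1\neq 0$, we have $\|\xi_j s/N\|_{\bbr/\bbz}\leq \rho_j/L$ for each $j$, and hence by the triangle inequality on the circle, for every integer $0\leq k\leq L$,
\[\|\xi_j\cdot ks/N\|_{\bbr/\bbz}\leq k\rho_j/L\leq \rho_j.\]
This says exactly that $0,s,2s,\ldots,Ls$ all lie in $B$.

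It remains only to check that these $L+1$ values are distinct in $\bbz/N\bbz$. Since $N$ is prime and $s\not\equiv 0\pmod N$, multiplication by $s$ is a bijection of $\bbz/N\bbz$, so $0,s,\ldots,Ls$ are distinct provided $L<N$, which is automatic given that $\abs{B}\leq N$. Altogether this produces a genuine arithmetic progression of length $L+1\gg \abs{B}^{1/d}$ inside $B$.

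The argument is essentially a one-line pigeonhole, and there is no real obstacle; the only bookkeeping issue is the constant-factor conversion between the character width $\nu(\gamma_j)$ and the torus-distance bound $\rho_j$, together with the factor $2$ appearing in $K$, both of which are absorbed into the implicit constant. Primality of $N$ is used only to guarantee the non-degeneracy of the resulting AP.
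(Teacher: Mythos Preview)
Your proof is correct and follows essentially the same strategy as the paper: locate a nonzero element $s$ of a suitably narrowed copy of $B$ and observe that its first $\asymp \abs{B}^{1/d}$ multiples lie in $B$. The only difference is packaging---the paper invokes the size lemma $\abs{B_\rho}\geq(\rho/4)^d\abs{B}$ (Lemma~\ref{lemma:bohrsiz}) to guarantee that $B_\rho$ contains a nonzero point when $\rho\asymp\abs{B}^{-1/d}$, whereas you reprove that fact inline via pigeonhole on the box $K$; the nonzero $s$ you produce is precisely a nonzero element of $B_{1/L}$.
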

\begin{proof}
Let $\rho=4(2/\abs{B})^{1/d}$, and note that by Lemma~\ref{lemma:bohrsiz} we have
\[\abs{B_\rho}\geq (\rho/4)^d\abs{B}=2.\]
In particular there exists some $x\in B_\rho\backslash \{0\}$. By the triangle inequality it is clear that $\{x,\ldots,\lfloor \rho^{-1}\rfloor x\}\subseteq B$, whence $B$ contains an arithmetic progression of length $\gg \rho^{-1}$.
\end{proof}

\end{document}